\def\hatgap{2pt}
\def\subdown{-2pt}
\newcommand\reallywidehat[2][]{%
\renewcommand\stackalignment{l}%
\stackon[\hatgap]{#2}{%
\stretchto{%
    \scalerel*[\widthof{$#2$}]{\kern-.6pt\bigwedge\kern-.6pt}%
    {\rule[-\textheight/2]{1ex}{\textheight}}
}{0.5ex}
_{\smash{\belowbaseline[\subdown]{\scriptstyle#1}}}%
}}
\theoremstyle{plain}
\newtheorem{theorem}{Theorem}[section]
\newcommand{\R}{\ensuremath{\mathbb{R_+}}}
\newtheorem{corollary}{Corollary}[theorem]
\newtheorem{lemma}[theorem]{Lemma}
\newtheoremstyle{remark}
    {} 
    {} 
    {}          
    {}          
    {\bfseries} 
    {.}         
    {.5em}      
    {}          
\theoremstyle{remark}
\newtheoremstyle{example}
    {\dimexpr\topsep/2\relax} 
    {\dimexpr\topsep/2\relax} 
    {}          
    {}          
    {\bfseries} 
    {.}         
    {.5em}      
    {}          
\theoremstyle{example}
\newtheorem{example}{\emph{\textbf{Example}}}[section]
\newtheoremstyle{definition}
    {\dimexpr\topsep/2\relax} 
    {\dimexpr\topsep/2\relax} 
    {}          
    {}          
    {\bfseries} 
    {.}         
    {.5em}      
    {}          
\theoremstyle{definition}
\newtheorem{definition}{Definition}[section]
\newtheorem*{similartheorem*}{Theorem \dualnumber{$'$}}
\numberwithin{equation}{section}
\def\R{\mathbb{R}}
\begin{document}
\title[The Fourier transform on Rearrangement-Invariant Spaces]{The Fourier transform on Rearrangement-Invariant Spaces}

\keywords{Fourier transform, Weighted inequalities, Lorentz Gamma spaces, Orlicz spaces, Interpolation spaces}
{\let\thefootnote\relax\footnote{\noindent 2010 {\it Mathematics Subject Classification.} Primary 42B10; Secondary 46M35, 46E30, 46B70}}

\thanks{The third author is supported by National Board for Higher Mathematics, Government of India.}

\author{Ron Kerman, Rama Rawat and Rajesh K. Singh}

\address{Ron Kerman: Department of Mathematics, Brock University, St. Catharines, Ontario, L2S 3A1, Canada}
\email{rkerman@brocku.ca}
\address{Rama Rawat: Department of Mathematics and Statistics, Indian Institute of Technology, Kanpur-208016}
\email{rrawat@iitk.ac.in}

\address{Rajesh K. Singh : Department of Mathematics, Indian Institute of Science, Bangalore-560012, Karnataka, India.}
\email{rajeshsingh@iisc.ac.in}

\pagestyle{headings}

\begin{abstract}
We study inequalities of the form
\begin{equation*}
    \rho ( \lvert \hat{f}   \rvert) \leq C \sigma(f) < \infty, 
\end{equation*}
with $f \in L_{1}(\mathbb{R}^n)$, the Lebesgue-integrable functions on $\mathbb{R}^n$ and
\begin{equation*}
    \hat{f}(\xi) := \int_{\mathbb{R}^n}  f(x) \, 
e^{- 2 \pi i \xi \cdot x} dx, \ \ \ \xi \in \mathbb{R}^n.
\end{equation*}
The functionals $\rho$ and $\sigma$ are so-called rearrangement-invariant (r.i.) norms on   $M_{+}(\mathbb{R}^n)$, the nonnegative measurable functions on $\mathbb{R}^n$.

Results first proved in the general context of r.i. spaces are then both specialized and expanded on in the special cases of Orlicz spaces and of Lorentz Gamma spaces.
\end{abstract}

\maketitle

\section{Introduction}\label{Introduction}
\medskip
The Fourier transform $\mathscr{F}(f)$, or $\hat{f}$, of a function $f$ in the class $L_{1}(\mathbb{R}^n)$ of functions integrable on $\mathbb{R}^n$ is given by
\begin{equation}
    \hat{f}(\xi) := \int_{\mathbb{R}^n}  f(x) \, 
e^{- 2 \pi i \xi \cdot x} dx, \ \ \ \xi \in \mathbb{R}^n.
\end{equation}
For a succinct account of its basic properties, see \cite[pp. 91-94]{P02}.

We are interested in inequalities of the form
\begin{equation}\label{Fourier boundedness}
   \rho ( \lvert \hat{f}   \rvert) \leq C \sigma(f) < \infty, \ \ f \in L_{1}(\mathbb{R}^n),
\end{equation}
in which $\rho$ and $\sigma$ are so-called rearrangement-invariant (r.i.) norms on   $M_{+}(\mathbb{R}^n)$, the nonnegative functions in the class $M(\mathbb{R}^n)$ of measurable functions on $\mathbb{R}^n$. The salient feature of an r.i. norm $\rho$ is that

\begin{equation*}
    \rho(f) =\rho(g), \ \ \ f,g \in M(\mathbb{R}^n),
\end{equation*}
 whenever $f$ and $g$ are equimeasurable, namely, 
 \begin{equation*}
 \left| \{x \in \mathbb{R}^n :|f(x)| > \lambda \} \right| = \left| \{x \in \mathbb{R}^n :|g(x)| > \lambda \} \right|, \ \ \  \lambda \in \mathbb{R_+} : =(0,\infty).
\end{equation*}

For  our purposes the fundamental inequality involving $\hat{f}$, proved in \cite[Theorem 4.6]{JT71}, is 
\begin{equation}\label{JT U dominate F}
\int_0^t {(\hat{f})}^{*}(s)^2ds \leq C \int_0^t (U{f}^{*})(s)^2ds,
\end{equation}
with $(U{f}^{*})(s) :=\int_0^{s^{-1}}{f}^{*},$  $ f \in L_1(\mathbb{R}^n)$, $s \in \mathbb{R}_+$. 

In (\ref{JT U dominate F}), ${g}^{*}$ is the nonincreasing rearrangement of $g$ on $\mathbb{R_+}$ defined by 
\begin{equation*}
    {g}^{*}(s)= \mu_g^{-1}(s), \ \ \  s \in \mathbb{R_+},
\end{equation*}
where
\begin{equation*}
\mu_g(\lambda)= \left| \{x  \in \mathbb{R}^n  :|g(x)| > \lambda \} \right|, \ \ \  \lambda \in \mathbb{R_+}.
\end{equation*}

 We observe that $f,g \in M_{+}(\mathbb{R})$ are equimeasurable if and only if $f^*=g^*$ on $\mathbb{R_+}.$

 According to a result of Luxemberg, see \cite[Theorem 4.10, p. 62]{BS88}, an r.i. norm
 $\rho$ on $M_{+}(\mathbb{R}^n)$ can be represented in terms of an r.i. norm $\bar{\rho}$ on $M_{+}(\mathbb{R_+})$ by
\begin{equation*}
    \rho(f) =\bar{\rho}(f^\ast), \ \ \ f \in M_{+}(\mathbb{R}^n).
\end{equation*}

As we'll see, the inequality (\ref{JT U dominate F}) and (\ref{P boundedness on decreasing}) in  \Cref{FT implies U} below ensure that (\ref{Fourier boundedness}) is equivalent to
\begin{equation}\label{reduction to U}
   \bar{\rho}(Uh) \leq C\bar{\sigma}(h),  \ \ \ h \in M_{+}(\mathbb{R}_+),
\end{equation}
 when the r.i. space 
 \begin{equation*}
L_{\bar{\rho}}(\mathbb{R}_+):= \left\lbrace h \in M_+(\mathbb{R}_+) : \bar{\rho}(h)< \infty \right\rbrace,
\end{equation*}
 is an interpolation space between the space $L_2(\mathbb{R}_+)$ of functions square-integrable on $\mathbb{R}_+$ and the space  $L_\infty(\mathbb{R}_+)$ of functions essentially bounded on $\mathbb{R}_+.$ See \Cref{RI.Spaces} below for the definition of an interpolation space.

With the above considerations in mind, we can now describe our approach to the study of (\ref{reduction to U}). Given a specific class of r.i. spaces, we start with an r.i. norm $\bar{\rho}$ in the given class satisfying the relevant interpolation property. Then we form the smallest possible r.i. norm 
${\bar{\sigma}}_{\bar{\rho}}$ satisfying (\ref{reduction to U}) together with $\bar{\rho},$
namely, 
\begin{equation*}
{\bar{\sigma}}_{\bar{\rho}} (h)= {\bar{\rho}}(Uh^*), \ \ \ h\in M_{+}(\mathbb{R}_+).
\end{equation*}

We then determine conditions on an r.i. norm $\bar{\sigma}$, in the same class as $\bar{\rho}$, such that
\begin{equation*}
{\bar{\sigma}}_{\bar{\rho}} (h) \leq  \bar{\sigma} (h), \ \ \ h \in M_{+}(\mathbb{R}_+).
\end{equation*}

The following result shows (1.2) implies (1.4) with only a mild restriction on $\bar{\rho}.$

\begin{theorem}\label{FT implies U}
Let $\rho$ and $\sigma$ be r.i. norms on $M_{+}(\mathbb{R}^n)$ defined in terms of r.i. norms $\bar{\rho}$ and $\bar{\sigma}$ on $M_{+}(\mathbb{R_+})$  by 
\begin{equation}
    \rho(f) = \bar{\rho}(f^{*}) \ \ \ \text{and}   \ \ \ 
    \sigma(f) = \bar{\sigma}(f^{*}), \ \ \ f \in M_+(\mathbb{R}^n).
\end{equation}

Assume that 
\begin{equation}\label{P boundedness on decreasing}
    \bar{\rho}(g^{**}) \leq C  
     \bar{\rho}(g^{*}), 
\end{equation}
in which 
\begin{equation}
g^{**}(t)={t}^{-1} \int_0^t g^{*} , \ \ \ g \in M_{+}(\mathbb{R}_+), \ \ \ t \in \mathbb{R_+}.
\end{equation}
Then,
\begin{equation}
    \rho( \hat{f}) \leq C \sigma(f) , 
\end{equation}
 $f \in ( L_{1}\cap L_\sigma )(\mathbb{R}^n)$, $f(x)=g(|x|),$ $x \in \mathbb{R}^n$, with $g \downarrow$  on $\mathbb{R}_+$, implies 
 \begin{equation}
    \bar{\rho}( Uh^*) \leq C \,  \bar{\sigma}(h^*), \ \ \ h \in  L_{\bar{\sigma}}(\mathbb{R}_+). 
\end{equation}
\end{theorem}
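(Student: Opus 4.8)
The plan is to obtain the operator inequality for $U$ by testing the assumed Fourier inequality against a carefully chosen family of radially decreasing functions. Since $\bar\rho(Uh^*)$ and $\bar\sigma(h^*)$ depend on $h$ only through $h^*$, I may assume at the outset that $h=h^*$ is nonincreasing on $\mathbb{R}_+$. Because the hypothesis only supplies the Fourier inequality for functions in $(L_1\cap L_\sigma)(\mathbb{R}^n)$, I first truncate: set $h_N:=\min(h^*,N)\,\chi_{(0,N)}$, which is again nonincreasing, satisfies $\int_0^\infty h_N\le N^2<\infty$, has $\bar\sigma(h_N)\le\bar\sigma(h^*)<\infty$, and increases pointwise to $h^*$ as $N\to\infty$. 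It therefore suffices to prove $\bar\rho(Uh_N)\le C'\bar\sigma(h_N)$ with $C'$ independent of $N$ and then let $N\to\infty$, using $Uh_N\uparrow Uh^*$ (monotone convergence in the defining integral) together with the Fatou property of $\bar\rho$.

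For fixed $N$, let $f=f_N$ be the radially decreasing function on $\mathbb{R}^n$ defined by $f(x)=h_N(\omega_n|x|^n)$, where $\omega_n=|B(0,1)|$; computing distribution functions shows $f^*=h_N$, so $f$ is an admissible test function (it is radially decreasing and lies in $(L_1\cap L_\sigma)(\mathbb{R}^n)$, with $\sigma(f)=\bar\sigma(f^*)=\bar\sigma(h_N)$). The crux of the argument is the pointwise bound
\[
(Uf^*)(s)\ \le\ C_n\,(\hat f)^{**}(\omega_n s),\qquad s\in\mathbb{R}_+,
\]
valid for every radially decreasing $f\in L_1(\mathbb{R}^n)$. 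To prove it, fix $s$, let $B_s$ be the origin-centred ball with $|B_s|=1/s$ and radius $r_s=(\omega_n s)^{-1/n}$, and note that $(Uf^*)(s)=\int_0^{1/s}f^*=\int_{B_s}f$ since $f$ is radially decreasing. Fix once and for all the normalized Gaussian $\Phi(x)=e^{\pi}e^{-\pi|x|^2}$, so that $\Phi(x/r_s)\ge\chi_{B_s}(x)$ for all $x$ and $\widehat\Phi(\xi)=e^{\pi}e^{-\pi|\xi|^2}$ is nonnegative and lies in $L_1\cap L_\infty$. Using $f\ge0$, Fourier inversion for $\Phi(\cdot/r_s)$, Fubini's theorem, $\widehat\Phi\ge0$, and the substitution $\zeta=r_s\eta$,
\[
\int_{B_s}f\ \le\ \int_{\mathbb{R}^n}f(x)\,\Phi(x/r_s)\,dx\ =\ r_s^{\,n}\!\int_{\mathbb{R}^n}\widehat\Phi(r_s\eta)\,\hat f(\eta)\,d\eta\ \le\ \int_{\mathbb{R}^n}\widehat\Phi(\zeta)\,\lvert\hat f(\zeta/r_s)\rvert\,d\zeta .
\]
By the Hardy--Littlewood inequality the right-hand side is at most $\int_0^\infty(\widehat\Phi)^*(v)\,(\hat f)^*(\omega_n s\,v)\,dv$; splitting this integral at $v=1$ and using that $(\hat f)^*$ is nonincreasing and $(\hat f)^*\le(\hat f)^{**}$ bounds it by $\big(\lVert\widehat\Phi\rVert_\infty+\lVert\widehat\Phi\rVert_1\big)(\hat f)^{**}(\omega_n s)$, which is the claimed estimate with $C_n=2e^{\pi}$.

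It then remains to pass from the pointwise estimate to the norm estimate, which is routine. Applying $\bar\rho$, using its monotonicity and the boundedness from below of the dilation $g\mapsto g(\cdot/\omega_n)$ on $L_{\bar\rho}(\mathbb{R}_+)$ by a constant depending only on $\omega_n$, gives $\bar\rho(Uf^*)\le c_n\,\bar\rho\big((\hat f)^{**}\big)$. The mild hypothesis \eqref{P boundedness on decreasing}, applied to $g=(\hat f)^*$, yields $\bar\rho\big((\hat f)^{**}\big)\le C\,\bar\rho\big((\hat f)^*\big)=C\,\rho(\hat f)$, and the assumed Fourier inequality gives $\rho(\hat f)\le C\,\sigma(f)=C\,\bar\sigma(f^*)$. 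Chaining these produces $\bar\rho(Uf^*)\le C'\bar\sigma(f^*)$ with $C'$ independent of $f$; since $f=f_N$ and $f^*=h_N$, this reads $\bar\rho(Uh_N)\le C'\bar\sigma(h_N)\le C'\bar\sigma(h^*)$, and letting $N\to\infty$ completes the proof. I expect the only genuine obstacle to be the pointwise lower bound for $(\hat f)^{**}$ in terms of $Uf^*$: because $\hat f$ need not be nonnegative even when $f$ is radially decreasing, there is no direct pointwise lower bound on $\hat f$ itself, and the Gaussian-majorant device above is exactly what circumvents this; everything after it is standard rearrangement-invariant bookkeeping.
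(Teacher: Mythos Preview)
Your proof is correct and follows the same overall architecture as the paper's---produce a pointwise bound $(Uf^*)(s)\le C_n(\hat f)^{**}(c_n s)$ for radially decreasing $f$, then use the hypothesis \eqref{P boundedness on decreasing} and the assumed Fourier inequality to finish---but the device you use to obtain the pointwise bound is genuinely different.

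The paper majorizes $\chi_{B_{1/4}}$ by a constant times $\widetilde{J_{n/2}}(2\pi|y|)^2$, whose Fourier transform equals $(2\pi)^{-n}(\chi_{B_1}\ast\chi_{B_1})$ and is therefore \emph{compactly supported}; this localizes the integral of $|\hat f|$ to a ball and leads directly to $\frac{1}{t}\int_0^{ct}(\hat f)^*$ without any further rearrangement argument. Your Gaussian majorant $\Phi(\cdot/r_s)$ has a Fourier transform with full support, so you compensate with Hardy--Littlewood and the split at $v=1$. What your route buys is simplicity: no Bessel identities, no knowledge of $\widehat{\chi_{B_r}}$, and a constant $2e^\pi$ that is dimension-free (only the dilation factor $\omega_n$ carries the dimension). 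What the paper's route buys is a sharper and more transparent localization in frequency, at the cost of the Bessel-function machinery.

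One further point in your favor: you handle the approximation step explicitly via the truncations $h_N$ and the Fatou property, whereas the paper's proof passes from $f^*$ with $f\in(L_1\cap L_\sigma)(\mathbb{R}^n)$ to general $h\in L_{\bar\sigma}(\mathbb{R}_+)$ without comment, leaving the non-$L_1$ case implicit.
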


In the next section we provide background material on r.i. norms. As mentioned above, certain so-called interpolation spaces will be considered.

Section 3 and 4 are devoted, respectively, to the proof of \Cref{FT implies U} and Theorem 4.1, the latter of which gives a general mapping theorem 
for $\mathscr{F}$ between r.i. spaces. Section 5 deals with $\mathscr{F}$
in the context of Orlicz spaces and Section 6 with $\mathscr{F}$ between
Lorentz Gamma spaces. Finally, in Section 7 we briefly discuss recent work on weighted norm inequalities for $\mathscr{F}(f)$.

Throughout this article, we write  $A \simeq B $ to abbreviate $C_1 A\leq  B \leq C_2 A$ for some constants $C_1, C_2 > 0 $ independent of $A$ and $B$.

\section{Rearrangement invariant spaces}\label{RI.Spaces}
\medskip
A theorem of Hardy and Littlewood asserts that \begin{equation}
\int_{\mathbb{R}^n}f(x)g(x)dx \leq \int_{\mathbb{R_+}}f^*(t)g^*(t)dt, \ \ \ f,g \in M_+(\mathbb{R}^n).
\end{equation}
The operation of rearrangement, though not sublinear itself, is sublinear in the average, namely,
\begin{equation}
(f+g)^{**}(t) \leq f^{**}(t) + g^{**}(t), \ \ \ f,g \in M_+(\mathbb{R}^n), \ \ t \in \mathbb{R_+}.
\end{equation}

\begin{definition}
\label{defn-ri-spaces}
A rearrangement-invariant (r.i.) Banach function norm $\rho$ on $M_+(\Omega )$, $\Omega= \mathbb{R}^n$ or $\mathbb{R_+}$,
satisfies
\begin{enumerate}
\item $\rho(f) \geq 0$, with $\rho(f)=0$ if and only if $f=0$ a.e.;
\item $\rho(cf)=c \, \rho(f)$, $c>0$;
\item $\rho(f+g) \leq \rho(f) + \rho(g)$;
\item $0 \leq f_n \uparrow f$ implies $\rho(f_n) \uparrow \rho(f)$;
\item $\rho(\chi_{E})<\infty$ for all measurable $E \subset \Omega$ such that $|E|<\infty$;
\item $\int_{E} f \leq C_{E} \, \rho(f)$, with $E \subset \Omega$, $|E| < \infty$ and $C_{E}>0$ independent of $f \in M_+(\Omega)$;
\item $\rho(f)= \rho(g)$ whenever $\mu_{f} = \mu_{g}$.
\end{enumerate}
\end{definition}

According to a fundamental result of Luxemberg \cite[Chapter 2, Theorem 4.10]{BS88}, there corresponds to every r.i. norm $\rho$ on $M_+(\mathbb{R}^n)$ an r.i. norm $\bar{\rho}$ on $M_+(\mathbb{R_+})$ such that
\begin{equation}
    \rho(f) = \bar{\rho}(f^{*}), \ \ \ f \in M_+(\mathbb{R}^n).
\end{equation}
A basic technique for working with r.i. norms involves the Hardy-Littlewood-Polya (HLP) Principle which asserts that
\begin{equation*}
    f^{**} \leq g^{**} \ \ \ \text{implies} \ \ \ \rho(f) \leq \rho(g);
\end{equation*}
see \cite[Chapter 3, Proposition 4.6]{BS88}. This principle
is based on a result of Hardy, a generalized form of which reads
\begin{equation}
    \int_{0}^{t} f \leq \int_{0}^{t} g
\end{equation}
implies  
\begin{equation*}
    \int_{0}^{t} f h^{*} \leq \int_{0}^{t} g h^{*}, \ \ \ f,g,h \in M_+(\mathbb{R_+}), \ \ \ t \in \mathbb{R_+}.
\end{equation*}
  
The K\"othe dual of an r.i. norm $\rho$ on $M_+(\Omega)$ is another such norm, $\rho'$, with
 \begin{equation*}
\rho'(g):= \sup_{\rho(f) \leq 1} \int_{\Omega} f(x)g(x), \ \ \   f, g \in M_+(\Omega).
\end{equation*}
 It obeys the Principle of Duality
\begin{equation}
    \rho'' = (\rho')' = \rho.
\end{equation}
 Further, one has the H\"older inequality
 \begin{equation*}
     \int_{\Omega} fg \leq \rho(f) \rho'(g),  \ \ \ f, g \in M_+(\Omega).
 \end{equation*}
 Finally,
 \begin{equation*}
     \bar{\rho'} = (\bar{\rho})'.
 \end{equation*}

Corresponding to an r.i. norm $\rho$ on $M_+(\Omega)$ is the class \begin{equation*}
L_{\rho}(X):= \left\lbrace f \in M(\Omega) : \rho(|f|)< \infty \right\rbrace,
\end{equation*}
which becomes a Banach space under the norm $\| f \|_{\rho} = \rho (|f|)$, $f \in L_{\rho}(\Omega)$. The space $L_{\rho}(\Omega)$ is then a rearrangement-invariant space.

The Orlicz and Lorentz Gamma spaces studied in sections $5$ and $6$, respectively, are examples of such r.i. spaces.

The dilation operator $E_{s}$, $s \in \mathbb{R_+}$, is defined at $f \in M_+(\mathbb{R_+})$, $t \in \mathbb{R_+}$, by
\begin{equation*}
(E_s f)(t)= f(st), \ \ \ s, t \in \mathbb{R_+}.
\end{equation*}
The operator $E_{s}$ is bounded on any r.i. space $L_{\rho}(\mathbb{R_+})$. We denote its norm by $h_{\rho}(s)$. Using $h_{\rho}$ we define the lower and upper  indices of $L_{\rho}(\mathbb{R_+})$ as
\begin{equation}\label{Boyd indices def}
i_{\rho}= \sup\limits_{s > 1} \frac{- \log h_{\rho}(s)}{\log s}  \ \ \  \text{and}  \ \ \ I_{\rho}= \inf\limits_{0<s < 1} \frac{- \log h_{\rho}(s)}{\log s},
\end{equation}
respectively. One has
\begin{equation*}
    i_{\rho} = \lim\limits_{s \rightarrow \infty} \frac{- \log h_{\rho}(s)}{\log s} \ \ \ \text{and} \ \ \ I_{\rho}= \lim\limits_{s \rightarrow 0^+} \frac{- \log h_{\rho}(s)}{\log s}.
\end{equation*}
Further, $0 \leq i_{\rho} \leq I_{\rho} \leq 1$ and, moreover,
\begin{equation*}
    i_{\rho'}= 1 - I_{\rho} \ \ \ \text{and} \ \ \ I_{\rho'} = 1 - i_{\rho}.
\end{equation*}
For all this, see \cite[pp. 1250-1252]{Bo69}.

If we denote by $k_{\rho}(s)$ the norm of $E_{s}$ on the characteristic functions $\chi_{F}$, $F \subset \mathbb{R_+}$, $|F|< \infty$, and define $j_{\rho}$ and $J_{\rho}$ by replacing $h_{\rho}(s)$ in (\ref{Boyd indices def}) by $k_{\rho}(s)$, we obtain the fundamental indices of $L_{\rho}(\mathbb{R_+})$. It turns out that when $L_{\rho}(\mathbb{R_+})$ is an Orlicz space or Lorentz Gamma space $i_{\rho}= j_{\rho}$ and $I_{\rho}= J_{\rho}$.

Finally, we describe that part of Interpolation Theory which is relevant to this paper.

Let $X_{1}$ and $X_{2}$ be Banach spaces compatible in the sense that both are continuously imbedded in the same Hausdorff topological space $H$, written
\begin{equation*}
    X_{i} \hookrightarrow H, \ \ \ i = 1,2.
\end{equation*}
The spaces $X_{1} \cap X_{2}$ and $X_{1} + X_{2}$ are the sets
\begin{equation*}
    X_{1} \cap X_{2} := \left\lbrace x : x \in X_{1} \ \ \text{and} \ \ x \in X_{2} \right\rbrace
\end{equation*}
and
\begin{equation*}
    X_{1} + X_{2} := \left\lbrace x : x= x_{1} + x_{2}, \ \ \text{for some} \  x_{1} \in X_{1},  \  x_{2} \in X_{2} \right\rbrace,
\end{equation*}
with norms
\begin{equation*}
    \|x \|_{X_{1} \cap X_{2}  } = \max \left[ \,  \|x \|_{X_{1}} , \|x \|_{X_{2}} \right]
\end{equation*}
and
\begin{equation*}
    \|x \|_{X_{1} + X_{2}  } = \inf \left\lbrace \,  \|x_{1} \|_{X_{1}} + \|x_{2} \|_{X_{2}} : x= x_{1} + x_{2},  \,  x_{1} \in X_{1},  \  x_{2} \in X_{2} \right\rbrace.
\end{equation*}

Recall that given Banach spaces $X_{1}$ and $X_{2}$ imbedded in a common Hausdorff topological vector space, their Peetre $K$-functional is defined for $x \in X_{1}+ X_{2}$, $t>0$, by
\begin{equation*}
    K(t,x;X_{1},X_{2}) = \inf_{x \, = \, x_{1} + x_{2}} \left[ \| x_{1} \|_{X_{1}}  +  t \,  \| x_{2} \|_{X_{2}}  \right].
\end{equation*}

We observe that, for $\Omega = \mathbb{R}^n$ or $\mathbb{R_+}$, $p \in [1, \infty)$, $L_{p}(\Omega)$ and $L_{\infty}(\Omega)$ are compatible, each being continuously imbedded in the Hausdorff topological space $M(\Omega)$ equipped with the topology of convergence in measure. One has
\begin{equation*}
    K(t, f; L_{p}(\Omega),  L_{\infty}(\Omega)) \simeq \left[ \int_{0}^{t^{p}} f^{*}(s)^{p} ds \right]^{1/p}, \   t>0,
\end{equation*}
$f \in \left( L_{p} + L_{\infty} \right)(\Omega)$. 

In the above notation, the   inequality (\ref{JT U dominate F}) from \cite{JT71} reads
\begin{equation} \label{JT interms of K functional}
    K \left(t, (\hat{f})^{*}; L_{2}( \mathbb{R_+} ),  L_{\infty}( \mathbb{R_+} ) \right) \leq  K \left(t, C^{\frac{1}{2}} \, U f^{*}; L_{2}( \mathbb{R_+} ),  L_{\infty}( \mathbb{R_+} ) \right).
\end{equation}

\begin{definition} A Banach space $Y$ is said to be intermediate between $X_{1}$ and $X_{2}$ if
\begin{equation*}
    X_{1} \cap X_{2} \hookrightarrow  Y  \hookrightarrow X_{1} + X_{2}. 
\end{equation*}
\end{definition}

\begin{definition} A Banach space $Y$ intermediate between the compatible spaces $X_{1}$ and $X_{2}$ is said to be monotone if, given $x, y \in X_{1} + X_{2}$, with
\begin{equation} \label{monotone wrt X1 X2}
    K(t, x; X_{1}, X_{2}) \leq K(t, y; X_{1}, X_{2}), \ \ \ t \in \mathbb{R_+},
\end{equation}
one has $y \in Y$ implies $x \in Y$ and $ \| x \|_{Y} \leq  \| y \|_{Y}$.
\end{definition}

The result of Lorentz-Shimogaki in \cite[Theorem 2 and Lemma 3]{LS71} asserts that the r.i. interpolation  spaces between $L_{p}(\Omega)$ and $L_{\infty}(\Omega)$ are precisely the monotone spaces in that context. Further, the inequality (\ref{JT interms of K functional}) is a special case of (\ref{monotone wrt X1 X2}). Thus, for $L_{\rho}(\mathbb{R}^{n})$ between $L_{2}(\mathbb{R}^{n})$ and $L_{\infty}(\mathbb{R}^{n})$, there holds
\begin{equation*}
    \begin{split}
        \rho ( \lvert \hat{f}   \rvert) = \bar{\rho} \left(  (\hat{f})^{*} \right) 
        & \leq C^{\frac{1}{2}} \bar{\rho} \left( U f^{*} \right)\\
        & \leq M C^{\frac{1}{2}} \bar{\sigma} (  f^{*} )\\
         & = M C^{\frac{1}{2}} \sigma (  f ),
        \end{split}
\end{equation*}
whenever the r.i. norms $\rho$ and $\sigma$ on $M_{+}(\mathbb{R}^{n})$ satisfy
 \begin{equation}
    \bar{\rho}( U f^{*}) \leq M \,  \bar{\sigma}(f^{*}), \ \ \ f \in  M_{+}(\mathbb{R}^{n}). 
\end{equation}

\section{Proof of \texorpdfstring{\Cref{FT implies U}}{TEXT} }

Let $f \in (L_{1} \cap L_{\sigma}  )(\mathbb{R}^n) $ be such that $f(x)= g(|x|)$, $x \in \mathbb{R}^n$, with $g \downarrow$ on $\mathbb{R_+}$. Denote by $B_{r}$ the ball of radius $r>0$ about the origin in $\mathbb{R}^n$. Set $r_{n}=r_{n}(t)= (\nu_{n}t)^{- 1 / n}$, $t \in \mathbb{R_+}$, $\nu_{n} = |B_{1}|= \pi^{\frac{n}{2}} / \Gamma(\frac{n}{2} + 1)$.

Now, 
\begin{equation*}
    \widehat{ \chi_{B_{r}}} (\xi) =  r^{n} \, \frac{ J_{\frac{n}{2}  } (2\pi r |\xi|) }{ (r |\xi|)^{\frac{n}{2}}  }, \ \ \ \xi \in \mathbb{R}^n,
\end{equation*}
where $J_{\frac{n}{2}}$ is the Bessel function of the first kind of order $\frac{n}{2}$.

Thus, with $f,t$ and $r_{n}$ as above, we have
\begin{equation*}
    \begin{split}
        (U f^{*})(t) 
        & = \int_{0}^{1/t} f^{*} = \int_{B_{r_{n}}} f = (4r_{n})^{n} \int_{ B_{ 1 /4 } } f( 4 r_{n} y) dy\\
        & \leq (4 r_{n})^{n} C_{n} \int_{B_{1 / 4} } \widetilde{J_{\frac{n}{2}}} (2 \pi |y|)^{2}  f( 4 r_{n} y) dy\\
        & \leq  (4 r_{n})^{n} C_{n}  \int_{\mathbb{R}^n} \widetilde{J_{\frac{n}{2}}} (2 \pi |y|)^{2}  f( 4 r_{n} y) dy,
    \end{split}
\end{equation*}
here, $\widetilde{ J_{\frac{n}{2}}   } (s) = \frac{  J_{\frac{n}{2}} (s) }{ s^{\frac{n}{2}}}$, $s \in \mathbb{R_+}$. The last term equals
\begin{equation*}
    \begin{split}
        & 4^{n} C_{n} \int_{\mathbb{R}^n} \reallywidehat{ \left(  \widetilde{J_{\frac{n}{2}}} (2 \pi \cdot) \right)^{2} } ( \xi)  \hat{f}( {\textstyle\frac{\xi}{ 4 r_{n}} } ) \, d\xi \\
        & =  4^{n} (2 \pi )^{-n} C_{n} \int_{\mathbb{R}^n} \left( \chi_{B_{1}} \ast \chi_{B_{1}}  \right) ( \xi)   \hat{f}( {\textstyle\frac{\xi}{ 4 r_{n}} } ) \, d\xi \\
        & \leq  ( { \textstyle\frac{\pi}{2} } )^{-n} C_{n} \int_{2 B_{1}} \left|  \hat{f}( {\textstyle\frac{\xi}{ 4 r_{n}} } ) \right| \, d\xi \\
        & = ( { \textstyle\frac{8}{\pi} } )^{n} C_{n} \,  r_{n}^{n} \int_{|\xi| \leq (2 \,  r_{n})^{-1}  } \left| \hat{f}( \xi) \right| \, d\xi, \\
        & \leq  ( { \textstyle\frac{8}{\pi} } )^{n} C_{n} \,  r_{n}^{n} \int_{0}^{ 2^{-n} \nu_{n}^{2}  t} (\hat{f})^{*}\\
         & =  ( { \textstyle\frac{8}{\pi} } )^{n} \frac{C_{n}}{ \nu_{n}}  \,  \frac{1}{t} \int_{0}^{ 2^{-n} \nu_{n}^{2}  t} (\hat{f})^{*}\\
          & \leq  C_{n}' \,   \frac{1}{t} \int_{0}^{ t} (\hat{f})^{*},
    \end{split}
\end{equation*}
that is,
\begin{equation*}
    \int_{0}^{t^{-1}} f^{*} \leq C_{n}' \frac{1}{t} \int_{0}^{t} (\hat{f})^{*},
\end{equation*}
where $C_{n}' = ( { \textstyle\frac{8}{\pi} } )^{n} \frac{C_{n}}{ \nu_{n}} \max [1, 2^{-n} \nu_{n}^2 ]$.

Hence,
\begin{equation*}
    \begin{split}
        \bar{\rho} (U f^{*}) 
        & \leq C_{n}' \,  \bar{\rho} \left(  \frac{1}{t} \int_{0}^{t} (\hat{f})^{*} \right)\\
        & \leq C_{n}' \,  \bar{\rho} \left(   (\hat{f})^{*} \right), \ \  \text{by} \ (\ref{P boundedness on decreasing}), \\
        & =  C_{n}' \, \rho  (\hat{f})\\
        & \leq C \,  C_{n}' \, \sigma (f), \ \  \text{by assumption,}\\
        & = C \,  C_{n}' \,  \bar{\sigma}(f^{*}).
    \end{split}
\end{equation*}

Since, for $g \in M_+(\mathbb{R_+})$,
\begin{equation*}
    U g \leq U \left( g^{*} \right),
\end{equation*}
we get
\begin{equation*}
    \bar{\rho} (U g) \leq \bar{\rho} \left(  U  g^{*} \right) \leq C \bar{\sigma} (g). \ \ \  \square
\end{equation*}

\section{\texorpdfstring{$\mathscr{F}$}{TEXT}  between r.i. spaces} 

\begin{theorem}\label{F implies U on riBFS}
Let $\rho$ and $\sigma$ be r.i. norms on $M_+(\mathbb{R}^n)$ determined, respectively, by the r.i. norms $\bar{\rho}$ and $\bar{\sigma}$ on $M_+(\mathbb{R_+})$. Assume $L_{\bar{\rho}}(\mathbb{R_+})$ is an interpolation space between $L_{2}(\mathbb{R_+})$ and $L_{\infty}(\mathbb{R_+})$, so that, in particular, (\ref{P boundedness on decreasing}) holds.
Then,
\begin{equation*}
    \rho( \lvert \hat{f} \rvert ) \leq C \sigma(f), \ \ \ f \in (L_{\sigma} \,  \cap \, L_{1})(\mathbb{R}^n),
\end{equation*}
if and only if
\begin{equation*}
    \bar{\rho} (U g) \leq C \bar{\sigma} (g), \ \ \  g \in  L_{\bar{\sigma}}(\mathbb{R_+}).
\end{equation*}
\end{theorem}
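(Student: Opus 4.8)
Since the assertion is an equivalence, the plan is to prove the two implications in turn. The forward one is essentially a restatement of \Cref{FT implies U}, while the converse is the chain of estimates indicated at the close of \Cref{RI.Spaces}, which I would now make precise.

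$(\Rightarrow)$ Assume $\rho(|\hat f|)\le C\sigma(f)$ for every $f\in(L_\sigma\cap L_1)(\mathbb{R}^n)$. In particular the inequality holds for all such $f$ that are radially symmetric and radially decreasing, that is, $f(x)=g(|x|)$ with $g\downarrow$ on $\mathbb{R_+}$; this is exactly the hypothesis of \Cref{FT implies U}. That theorem applies in our situation because the standing assumption --- that $L_{\bar\rho}(\mathbb{R_+})$ is an interpolation space between $L_2(\mathbb{R_+})$ and $L_\infty(\mathbb{R_+})$ --- guarantees (\ref{P boundedness on decreasing}), as noted in its statement. The conclusion of \Cref{FT implies U}, in the form derived in the last two lines of its proof (using $Ug\le U(g^*)$ together with the rearrangement-invariance $\bar\sigma(g)=\bar\sigma(g^*)$), is precisely $\bar\rho(Ug)\le C'\bar\sigma(g)$ for all $g\in L_{\bar\sigma}(\mathbb{R_+})$, with $C'$ depending only on $C$ and $n$.

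$(\Leftarrow)$ Suppose $\bar\rho(Ug)\le C\bar\sigma(g)$ for all $g\in L_{\bar\sigma}(\mathbb{R_+})$, and fix $f\in(L_\sigma\cap L_1)(\mathbb{R}^n)$. First I would record the elementary facts that $\hat f\in L_\infty(\mathbb{R}^n)$, so $(\hat f)^*$ is a bounded function on $\mathbb{R_+}$, and that $(Uf^*)(s)=\int_0^{1/s}f^*\le\|f\|_{1}$ for every $s\in\mathbb{R_+}$, so $Uf^*\in L_\infty(\mathbb{R_+})$; in particular both functions lie in $(L_2+L_\infty)(\mathbb{R_+})$ and their $K$-functionals with respect to the couple $\big(L_2(\mathbb{R_+}),L_\infty(\mathbb{R_+})\big)$ are defined. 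Next, since $f^*\in L_{\bar\sigma}(\mathbb{R_+})$ (because $\bar\sigma(f^*)=\sigma(f)<\infty$), the hypothesis yields $\bar\rho(Uf^*)\le C\bar\sigma(f^*)=C\sigma(f)<\infty$, so $Uf^*\in L_{\bar\rho}(\mathbb{R_+})$. Writing $C_1$ for the constant of (\ref{JT U dominate F}), the Jodeit--Torchinsky inequality in the form (\ref{JT interms of K functional}) gives
\begin{equation*}
 K\!\left(t,(\hat f)^*;L_2(\mathbb{R_+}),L_\infty(\mathbb{R_+})\right)\le K\!\left(t,C_1^{1/2}Uf^*;L_2(\mathbb{R_+}),L_\infty(\mathbb{R_+})\right),\qquad t\in\mathbb{R_+},
\end{equation*}
while the Lorentz--Shimogaki theorem says the interpolation space $L_{\bar\rho}(\mathbb{R_+})$ is monotone relative to $\big(L_2(\mathbb{R_+}),L_\infty(\mathbb{R_+})\big)$. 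Applying monotonicity with this $K$-functional comparison and $C_1^{1/2}Uf^*\in L_{\bar\rho}(\mathbb{R_+})$ yields $(\hat f)^*\in L_{\bar\rho}(\mathbb{R_+})$ and $\bar\rho\big((\hat f)^*\big)\le C_1^{1/2}\bar\rho(Uf^*)$. Chaining the estimates,
\begin{equation*}
 \rho(|\hat f|)=\bar\rho\big((\hat f)^*\big)\le C_1^{1/2}\bar\rho(Uf^*)\le C_1^{1/2}C\,\bar\sigma(f^*)=C_1^{1/2}C\,\sigma(f),
\end{equation*}
which is the asserted inequality, with constant $C_1^{1/2}C$ in place of $C$.

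I do not anticipate a genuine difficulty: all the real content is carried by the quoted results --- the Jodeit--Torchinsky estimate (\ref{JT U dominate F}), the Lorentz--Shimogaki description of the r.i.\ interpolation spaces between $L_2$ and $L_\infty$ as exactly the monotone ones, and \Cref{FT implies U} itself. The points that do require a little attention are the membership verifications in the previous paragraph (needed so that the monotonicity property may legitimately be applied to $(\hat f)^*$ and $C_1^{1/2}Uf^*$) and the minor convention that the symbol $\rho(\hat f)$ appearing in \Cref{FT implies U} stands for $\rho(|\hat f|)$, the Fourier transform of a nonnegative radially decreasing function being real-valued but in general sign-changing.
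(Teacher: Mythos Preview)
Your proof is correct and follows essentially the same approach as the paper: the forward implication is referred to \Cref{FT implies U}, and the converse is the chain $\rho(|\hat f|)=\bar\rho((\hat f)^*)\le C_1^{1/2}\bar\rho(Uf^*)\le C_1^{1/2}C\,\bar\sigma(f^*)=C_1^{1/2}C\,\sigma(f)$ obtained from (\ref{JT interms of K functional}) via the Lorentz--Shimogaki monotonicity characterization, exactly as sketched at the end of \Cref{RI.Spaces}. Your added membership checks for $(\hat f)^*$ and $Uf^*$ in $(L_2+L_\infty)(\mathbb{R_+})$ are a welcome bit of extra care not made explicit in the paper.
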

\begin{proof}
The only if part is the content of \Cref{FT implies U}. Again, the if part was proved at the end of the Section 2.
\end{proof}
Boyd in \cite[pp. 92-98]{Bo67}  associates to each r.i. norm  $\rho$ on $M_+(\Omega)$ and each $p>1$ the functional
\begin{equation}\label{Rho-p}
    \rho^{(p)} (f) = \rho (f^{p})^{\frac{1}{p}}, \ \ \ f \in M_+(\Omega).
\end{equation}
He shows that $\rho^{(p)}$ is an r.i. norm on $M_+(\Omega)$ and that (\ref{P boundedness on decreasing}) holds with $\rho = \rho^{(p)}$.

\begin{theorem} \label{Int Space bw p and infinity}
Let $\rho$  be an r.i. norm on $M_+(\Omega)$. For fixed $p>1$, define $\rho^{(p)}$ as in (\ref{Rho-p}). Then, $L_{\rho^{(p)}}(\Omega)$ is an interpolation space between $L_{p}(\Omega)$ and $L_{\infty}(\Omega)$.
\end{theorem}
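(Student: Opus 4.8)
The plan is to reduce everything to the Lorentz--Shimogaki theorem recalled in \Cref{RI.Spaces}: the r.i.\ interpolation spaces between $L_p(\Omega)$ and $L_\infty(\Omega)$ are exactly the monotone ones. So I need only check two things, namely that $L_{\rho^{(p)}}(\Omega)$ is intermediate between $L_p(\Omega)$ and $L_\infty(\Omega)$, and that it is monotone with respect to that couple. The device that makes both checks transparent is the identity $(f^*)^p = (f^p)^*$, valid because $t\mapsto t^p$ is increasing; combined with the formula for the $K$-functional recorded in \Cref{RI.Spaces} it gives
\begin{equation*}
  K\!\left(t,f;L_p(\Omega),L_\infty(\Omega)\right)^p \simeq \int_0^{t^p} f^*(s)^p\,ds = \int_0^{t^p}(f^p)^*(s)\,ds = t^p\,(f^p)^{**}(t^p),
\end{equation*}
with constants depending only on $p$. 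Thus the $K$-theory of the couple $(L_p,L_\infty)$ on $\Omega$ is, after the substitution $h\mapsto h^p$, the ordinary $f^{**}$-theory to which the Hardy--Littlewood--P\'olya (HLP) principle applies.

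First I would dispose of intermediacy. Writing $\rho(h)=\bar\rho(h^*)$, one has $\rho^{(p)}(f)=\bar\rho\big((f^*)^p\big)^{1/p}$. For $f\in (L_p\cap L_\infty)(\Omega)$ the function $(f^*)^p$ lies in $(L_1\cap L_\infty)(\mathbb{R_+})$ with $\|(f^*)^p\|_{L_1\cap L_\infty}=\|f\|_{L_p\cap L_\infty}^{\,p}$, so the standard embedding $(L_1\cap L_\infty)(\mathbb{R_+})\hookrightarrow L_{\bar\rho}(\mathbb{R_+})$ (see \cite[Chapter~2, Theorem~6.6]{BS88}) yields $\rho^{(p)}(f)\lesssim\|f\|_{L_p\cap L_\infty}$. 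For the reverse embedding, H\"older's inequality for $\bar\rho,\bar\rho'$ together with property~(5) of \Cref{defn-ri-spaces} applied to $\bar\rho'$ gives $\int_0^1 f^*(s)^p\,ds\le \bar\rho\big((f^*)^p\big)\,\bar\rho'(\chi_{(0,1)})$, and evaluating the displayed $K$-formula at $t=1$ produces $\|f\|_{L_p+L_\infty}\simeq\big(\int_0^1 f^*(s)^p\,ds\big)^{1/p}\lesssim\rho^{(p)}(f)$. Hence $(L_p\cap L_\infty)(\Omega)\hookrightarrow L_{\rho^{(p)}}(\Omega)\hookrightarrow (L_p+L_\infty)(\Omega)$.

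The heart of the argument is monotonicity. Suppose $f,g\in (L_p+L_\infty)(\Omega)$ with $K(t,f;L_p,L_\infty)\le K(t,g;L_p,L_\infty)$ for all $t$. By the displayed equivalence there is $C_p$, depending only on $p$, with $\int_0^u f^*(s)^p\,ds\le C_p\int_0^u g^*(s)^p\,ds$ for every $u>0$, i.e.\ $(f^p)^{**}\le C_p\,(g^p)^{**}=(C_p g^p)^{**}$ pointwise on $\mathbb{R_+}$. The HLP principle of \Cref{RI.Spaces} then gives $\rho(f^p)\le\rho(C_p g^p)=C_p\,\rho(g^p)$, hence $\rho^{(p)}(f)\le C_p^{1/p}\,\rho^{(p)}(g)$. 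In particular $g\in L_{\rho^{(p)}}(\Omega)$ forces $f\in L_{\rho^{(p)}}(\Omega)$ with $\|f\|_{\rho^{(p)}}\le C_p^{1/p}\|g\|_{\rho^{(p)}}$, which, together with the intermediacy above, is precisely the monotonicity hypothesis of Lorentz--Shimogaki; the theorem follows.

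This is essentially routine once the machinery of \Cref{RI.Spaces} is in hand; the only point needing a word of care is the constant $C_p$, which appears solely because $K(\cdot,\cdot;L_p,L_\infty)$ and $\big(\int_0^{(\cdot)^p}f^*(s)^p\,ds\big)^{1/p}$ coincide merely up to equivalence. Since $C_p$ depends on nothing but the couple, it does not affect the conclusion that $L_{\rho^{(p)}}(\Omega)$ is an interpolation space (if one prefers an exact $K$-inequality it can be absorbed into $g$, exactly as the factor $C^{1/2}$ was absorbed in \eqref{JT interms of K functional}). Everything else is a faithful transcription between the couple $(L_p,L_\infty)$ on $\Omega$ and the rearrangement calculus for $\bar\rho$ under $h\mapsto h^p$, so no new difficulty arises.
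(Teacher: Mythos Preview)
Your proof is correct, but it routes through the Lorentz--Shimogaki characterization whereas the paper verifies the interpolation property directly. The paper simply takes a linear $T$ bounded on $L_p(\Omega)$ and on $L_\infty(\Omega)$, invokes \cite[Theorem~1.11, pp.~301--304]{BS88} to get $\int_0^t (Tf)^*(s)^p\,ds\le C\int_0^t f^*(s)^p\,ds$, and then applies HLP to $\bar\rho$ exactly as you do, obtaining $\rho^{(p)}(Tf)\le C^{1/p}\rho^{(p)}(f)$. So the core computation---HLP after the substitution $h\mapsto h^p$, using $(f^*)^p=(f^p)^*$---is identical in both arguments; only the framing differs. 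Your route buys an explicit verification of intermediacy (which the paper leaves implicit) and makes the $K$-functional connection visible, at the cost of the extra bookkeeping around the equivalence constant $C_p$; the paper's route is shorter because it sidesteps the $K$-functional equivalence entirely and never needs to worry about that constant.
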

\begin{proof}
Suppose the linear operator $T$ satisfies
\begin{equation*}
    T : L_{p}(\Omega) \rightarrow L_{p}(\Omega) \ \ \ \text{and} \ \ \ T : L_{\infty}(\Omega) \rightarrow L_{\infty}(\Omega).
\end{equation*}
Then, according to  \cite[Theorem 1.11, pp. 301-304]{BS88}, there exists $C>0$, such that 
\begin{equation}
    \int_{0}^{t} (Tf)^{*}(s)^{p} ds \leq C \int_{0}^{t} f^{*}(s)^{p} ds, \ \ \ f \in (L_{p} + L_{\infty})(\Omega), \ t \in \mathbb{R_+}.
\end{equation}

The HLP Principle involving $\bar{\rho}$ yields
\begin{equation*}
    \begin{split}
        \rho \left( (Tf)^{p} \right) & = \bar{\rho} \left( \left[ (Tf)^{*}  \right]^{p} \right) \\
        & \leq C   \bar{\rho} \left( \left[ f^{*}  \right]^{p} \right)\\
        & = C \rho \left(  |f|^{p} \right), \ \ \ f \in L_{\rho^{(p)}}(\Omega),
    \end{split}
\end{equation*}

and hence
\begin{equation*}
   \rho^{(p)} (Tf) \leq C \rho^{(p)} (f), \ \ \ f \in L_{\rho^{(p)}}(\Omega).
\end{equation*}
\end{proof}

\begin{theorem}
Let $\rho$ and $\sigma$ be r.i. norms on $M_+(\mathbb{R}^n)$ determined, respectively, by the r.i. norms $\bar{\rho}$ and $\bar{\sigma}$ on $M_+(\mathbb{R_+})$ by  $\rho(f) = \bar{\rho}(f^{*})$ and $\sigma(f) = \bar{\sigma}(f^{*})$, $f \in M_+(\mathbb{R}^n)$. Then,
\begin{equation*}
    \rho^{(2)} ( \hat{f}) \leq C \sigma(f), \ \ \ f \in (L_{\sigma} \,  \cap \, L_{1})(\mathbb{R}^n),
\end{equation*}
if and only if
\begin{equation*}
     \overline{\rho^{(2)}} (Ug)  \leq C \bar{\sigma} (g), \ \ \ g \in  M_+(\mathbb{R_+}).
\end{equation*}
\end{theorem}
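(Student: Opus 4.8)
The plan is to deduce this from \Cref{F implies U on riBFS} together with \Cref{Int Space bw p and infinity}, taking the r.i.\ norm called $\rho$ in those results to be $\rho^{(2)}$ and leaving $\sigma$ unchanged. The one preliminary point to settle is that the r.i.\ norm on $M_+(\mathbb{R_+})$ representing $\rho^{(2)}$ is $(\bar{\rho})^{(2)}$; that is, $\overline{\rho^{(2)}} = (\bar{\rho})^{(2)}$. This is immediate from the elementary rearrangement identity $(f^{2})^{*} = (f^{*})^{2}$ on $\mathbb{R_+}$: for $f \in M_+(\mathbb{R}^n)$,
\[
    \rho^{(2)}(f) = \rho(f^{2})^{1/2} = \bar{\rho}\bigl( (f^{2})^{*} \bigr)^{1/2} = \bar{\rho}\bigl( (f^{*})^{2} \bigr)^{1/2} = (\bar{\rho})^{(2)}(f^{*}),
\]
so $(\bar{\rho})^{(2)}$ is the r.i.\ norm on $M_+(\mathbb{R_+})$ representing $\rho^{(2)}$, i.e.\ $\overline{\rho^{(2)}} = (\bar{\rho})^{(2)}$.

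With this in hand, I would invoke Boyd's result (quoted just before \Cref{Int Space bw p and infinity}) to see that $(\bar{\rho})^{(2)} = \overline{\rho^{(2)}}$ satisfies (\ref{P boundedness on decreasing}), and \Cref{Int Space bw p and infinity}, applied with $p = 2$ and $\Omega = \mathbb{R_+}$, to see that $L_{\overline{\rho^{(2)}}}(\mathbb{R_+})$ is an interpolation space between $L_{2}(\mathbb{R_+})$ and $L_{\infty}(\mathbb{R_+})$. Hence the hypotheses of \Cref{F implies U on riBFS} are satisfied with $\bar{\rho}$ replaced by $\overline{\rho^{(2)}}$ and $\rho$ by $\rho^{(2)}$. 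That theorem then yields the equivalence of
\[
    \rho^{(2)}( \lvert \hat{f} \rvert ) \leq C \sigma(f), \quad f \in (L_{\sigma} \cap L_{1})(\mathbb{R}^n),
\]
with
\[
    \overline{\rho^{(2)}} (Ug) \leq C \bar{\sigma}(g), \quad g \in L_{\bar{\sigma}}(\mathbb{R_+}).
\]

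Finally, I would note that the last inequality may be stated for all $g \in M_+(\mathbb{R_+})$ and not merely for $g \in L_{\bar{\sigma}}(\mathbb{R_+})$: when $g \in M_+(\mathbb{R_+}) \setminus L_{\bar{\sigma}}(\mathbb{R_+})$ one has $\bar{\sigma}(g) = \infty$, so $\overline{\rho^{(2)}}(Ug) \leq C\bar{\sigma}(g)$ holds trivially, while the reverse restriction is automatic. This gives exactly the statement as written. There is no substantial obstacle here: the only steps requiring (routine) verification are the identity $\overline{\rho^{(2)}} = (\bar{\rho})^{(2)}$ recorded above and the harmless passage between $L_{\bar{\sigma}}(\mathbb{R_+})$ and $M_+(\mathbb{R_+})$, the real content being carried entirely by \Cref{F implies U on riBFS} and \Cref{Int Space bw p and infinity}.
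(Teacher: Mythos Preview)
Your proposal is correct and follows exactly the same route as the paper, which simply says the result is a consequence of \Cref{F implies U on riBFS} and \Cref{Int Space bw p and infinity}. You have just spelled out the details (notably the identification $\overline{\rho^{(2)}} = (\bar{\rho})^{(2)}$ and the harmless passage from $L_{\bar{\sigma}}(\mathbb{R_+})$ to $M_+(\mathbb{R_+})$) that the paper leaves implicit.
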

\begin{proof}
The result is a consequence of \Cref{F implies U on riBFS} and \Cref{Int Space bw p and infinity}.
\end{proof}

Suppose now that $\mu$ is an r.i. norm on $M_+(\mathbb{R_+})$ satisfying $\mu \left( \frac{1}{1+t} \right) < \infty$. Denote by $X_{\mu}$ the set of all $x \in X_{1}+ X_{2}$ for which
\begin{equation*}
    \rho_{\mu} (x)= \mu \left( \frac{ K(t,x;X_{1},X_{2})  }{t}  \right) < \infty.
\end{equation*}
Then, $X_{\mu}$ with the norm $\rho_{\mu}$ is an interpolation space between $X_{1}$ and $X_{2}$.

It is well known that for $f \in \left( L_{p} + L_{\infty} \right)(\Omega)$, $p \geq 1$ one has
\begin{equation*}
    K(t, f; L_{p}(\Omega),  L_{\infty}(\Omega)) \simeq \left[ \int_{0}^{t^{p}} f^{*}(s)^{p} ds \right]^{1/p}, t>0,
\end{equation*}
so the space $X_{\rho_{\mu, p}}$, with the norm
\begin{equation*}
    \rho_{\mu, p} (f) = \overline{\rho_{\mu, p}} (f^{*}) = \bar{\mu } \left(  t^{-1} \left[ \int_{0}^{t^{p}} f^{*}(s)^{p} ds \right]^{1/p}  \right),   \ \ \  f \in M_+(\Omega),
\end{equation*}
is an interpolation space between $L_{p}(\Omega)$ and $L_{\infty}(\Omega)$. In view of this, \Cref{F implies U on riBFS} guarantees

\begin{theorem}
Let $\mu$ and $\sigma$ be r.i. norms on $M_+(\mathbb{R}^n)$ determined, respectively, by the r.i. norms $\bar{\mu}$ and $\bar{\sigma}$ on $M_+(\mathbb{R_+})$. Suppose 
$\mu \left( \frac{1}{1+t} \right) < \infty$. Set
\begin{equation*}
    \rho_{\mu, 2} (f) = \overline{\rho_{\mu, 2}} (f^{*}) = \bar{\mu} \left(  t^{-1} \left[  \int_{0}^{t^{2}} f^{*}(s)^{2} ds \right]^{1/2}  \right).
\end{equation*}
Then,
\begin{equation*}
      \rho_{\mu, 2} ( \hat{f}) \leq C \sigma(f), \ \ \ f \in (L_{\sigma} \,  \cap \, L_{1})(\mathbb{R}^n),
\end{equation*}
if and only if
\begin{equation*}
    \overline{  \rho_{\mu, 2} } (Ug)  \leq C \bar{\sigma} (g), \ \ \ g \in  M_+(\mathbb{R_+}).
\end{equation*}
\end{theorem}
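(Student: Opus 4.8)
The plan is to obtain the theorem as an immediate specialization of \Cref{F implies U on riBFS}, which already establishes the claimed equivalence for \emph{any} pair of r.i. norms $\bar\rho,\bar\sigma$ on $M_+(\mathbb{R_+})$ for which $L_{\bar\rho}(\mathbb{R_+})$ is an interpolation space between $L_2(\mathbb{R_+})$ and $L_\infty(\mathbb{R_+})$. Thus the entire task reduces to checking that $\bar\rho:=\overline{\rho_{\mu,2}}$ generates such an interpolation space; once that is done, taking $\sigma$ (hence $\bar\sigma$) arbitrary and $\rho=\rho_{\mu,2}$ in \Cref{F implies U on riBFS} produces exactly the two displayed inequalities.

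To verify the interpolation property I would reuse the construction recalled just before the statement: given an r.i. norm $\bar\mu$ on $M_+(\mathbb{R_+})$ with $\bar\mu\bigl(\tfrac{1}{1+t}\bigr)<\infty$ and a compatible couple $(X_1,X_2)$, the set $X_{\bar\mu}$ of those $x\in X_1+X_2$ with $\rho_{\bar\mu}(x):=\bar\mu\bigl(K(t,x;X_1,X_2)/t\bigr)<\infty$, normed by $\rho_{\bar\mu}$, is an interpolation space between $X_1$ and $X_2$. Taking $X_1=L_2(\mathbb{R_+})$ and $X_2=L_\infty(\mathbb{R_+})$ and using the equivalence $K\bigl(t,f;L_2(\mathbb{R_+}),L_\infty(\mathbb{R_+})\bigr)\simeq\bigl[\int_0^{t^2}f^*(s)^2\,ds\bigr]^{1/2}$ with absolute constants, one gets $\rho_{\bar\mu}(f)\simeq\overline{\rho_{\mu,2}}(f^*)$, so that $L_{\overline{\rho_{\mu,2}}}(\mathbb{R_+})$ coincides with $X_{\bar\mu}$ up to equivalence of norms and is therefore an interpolation space between $L_2(\mathbb{R_+})$ and $L_\infty(\mathbb{R_+})$; in particular \eqref{P boundedness on decreasing} holds with $\bar\rho=\overline{\rho_{\mu,2}}$. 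I would also note that the hypothesis $\mu\bigl(\tfrac{1}{1+t}\bigr)<\infty$ is precisely the finiteness condition required by this construction, since $t^{-1}\bigl[\int_0^{t^2}\chi_{(0,1)}(s)^2\,ds\bigr]^{1/2}\simeq\tfrac{1}{1+t}$, i.e. it is equivalent to $\overline{\rho_{\mu,2}}(\chi_{(0,1)})<\infty$ and so guarantees that the space is genuinely intermediate.

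With this in hand I would apply \Cref{F implies U on riBFS} with the above $\bar\rho,\bar\sigma$ (hence $\rho=\rho_{\mu,2}$): it yields that $\rho_{\mu,2}(|\hat f|)\le C\sigma(f)$ for all $f\in(L_\sigma\cap L_1)(\mathbb{R}^n)$ is equivalent to $\overline{\rho_{\mu,2}}(Ug)\le C\bar\sigma(g)$ for all $g\in L_{\bar\sigma}(\mathbb{R_+})$. The only cosmetic gap is that the theorem asks for the latter inequality for every $g\in M_+(\mathbb{R_+})$; this is harmless, for if $g\notin L_{\bar\sigma}(\mathbb{R_+})$ then $\bar\sigma(g)=\infty$ and the inequality is vacuous, so the two formulations coincide.

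I do not anticipate a real obstacle: the content is the specialization above. The one point needing care is that $\overline{\rho_{\mu,2}}$, as written, is a priori only a quasi-norm — subadditivity is not immediate, since the decreasing rearrangement is not pointwise subadditive and the $K$-functional equivalence carries a multiplicative constant — so I would phrase everything in terms of the genuine r.i. norm $\rho_{\bar\mu}$ and record that boundedness of $U$, the property \eqref{P boundedness on decreasing}, the Fourier inequality and the interpolation-space property are all stable under passing to an equivalent norm, the equivalence constants being absorbed into $C$. Matching $\mu\bigl(\tfrac{1}{1+t}\bigr)<\infty$ with the finiteness requirement of the $X_{\bar\mu}$ construction is the remaining routine verification.
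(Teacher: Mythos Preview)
Your proposal is correct and follows essentially the same approach as the paper: the paper states the theorem as an immediate consequence of \Cref{F implies U on riBFS}, having just observed in the preceding paragraph that the $K$-functional construction $X_{\bar\mu}$ with $X_1=L_2$, $X_2=L_\infty$ yields an interpolation space whose norm is (equivalent to) $\overline{\rho_{\mu,2}}$. Your additional care about the quasi-norm issue and the $g\in M_+(\mathbb{R_+})$ versus $g\in L_{\bar\sigma}(\mathbb{R_+})$ formulation is more than the paper itself supplies.
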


Finally, consider an r.i. norm $\rho$ on $M_+(\mathbb{R}^n)$ determined by the r.i. norm $\Bar{\rho}$ on $M_+(\mathbb{R_+})$ and set
\begin{equation*}
    \rho_{U} (f) := \left( \Bar{\rho} \,  \circ  U \right)(f^{*}) = \bar{\rho} (U f^{*}), \ \ \ f \in M_+(\mathbb{R}^n).
\end{equation*}
One has $\rho_{U}$  an r.i. norm if $\left( \Bar{\rho} \,  \circ  U \right)( \chi_{(0,t)} ) < \infty$ for all $t>0$, or, equivalently, $\bar{\rho} \left( \frac{1}{1+t} \right)  < \infty$. In that case, $L_{\bar{\rho} \, \circ U  } (\mathbb{R_+})$ is the largest r.i. space to be mapped into $L_{\bar{\rho}}(\mathbb{R_+})$ by $U$.

With this background we now have
\begin{theorem}
Let $\rho$ be an r.i. norm on $M_+(\mathbb{R}^n)$ defined in terms of an r.i. norm $\bar{\rho}$ on $M_{+}(\mathbb{R_+})$ such that
\begin{equation*}
    \bar{\rho} \left( \frac{1}{1+t} \right)  < \infty.
\end{equation*}
Assuming $L_{ \bar{\rho}}(\mathbb{R_+})$ is an interpolation space between $L_{2}(\mathbb{R_+})$ and $L_{\infty}(\mathbb{R_+})$, one has that $L_{\rho_{U}} (\mathbb{R}^n)$ is the largest r.i. space of functions on $\mathbb{R}^n$ to be mapped into $L_{\rho}(\mathbb{R}^n)$ by $\mathscr{F}$.
\end{theorem}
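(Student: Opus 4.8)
The plan is to deduce this theorem directly from \Cref{F implies U on riBFS} (equivalently \Cref{FT implies U}) together with two elementary facts about $U$, after first pinning down the $\mathbb{R_+}$-side objects. Note that the hypothesis $\bar\rho\bigl(\tfrac1{1+t}\bigr)<\infty$ guarantees, as recorded just above the statement, that $\rho_U$ is a genuine r.i.\ norm on $M_+(\mathbb{R}^n)$; since $\rho_U(f)=\bar\rho(Uf^*)$ depends on $f$ only through $f^*$, its Luxemburg representative on $M_+(\mathbb{R_+})$ is the r.i.\ norm $\overline{\rho_U}$ with $\overline{\rho_U}(h)=\bar\rho(Uh^*)$ (using $(f^*)^*=f^*$). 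There are two things to prove: (i) $\mathscr F$ maps $L_{\rho_U}(\mathbb{R}^n)$ into $L_\rho(\mathbb{R}^n)$; and (ii) whenever $\sigma$ is an r.i.\ norm on $M_+(\mathbb{R}^n)$ for which $\mathscr F$ maps $L_\sigma(\mathbb{R}^n)$ into $L_\rho(\mathbb{R}^n)$, one has $L_\sigma(\mathbb{R}^n)\hookrightarrow L_{\rho_U}(\mathbb{R}^n)$.

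For (i): for every $h\in M_+(\mathbb{R_+})$ and $s\in\mathbb{R_+}$, $(Uh)(s)=\int_0^{1/s}h\le\int_0^{1/s}h^*=(Uh^*)(s)$ by the Hardy--Littlewood inequality applied with the self-rearranged $\chi_{(0,1/s)}$, so monotonicity of $\bar\rho$ gives $\bar\rho(Uh)\le\bar\rho(Uh^*)=\overline{\rho_U}(h)$, i.e.\ $U\colon L_{\overline{\rho_U}}(\mathbb{R_+})\to L_{\bar\rho}(\mathbb{R_+})$ is bounded. Since $L_{\bar\rho}(\mathbb{R_+})$ is an interpolation space between $L_2(\mathbb{R_+})$ and $L_\infty(\mathbb{R_+})$, the ``if'' half of \Cref{F implies U on riBFS}, applied with $\bar\sigma=\overline{\rho_U}$ and hence $\sigma=\rho_U$, yields $\rho(\lvert\hat f\rvert)\le C\,\rho_U(f)$ for all $f\in(L_1\cap L_{\rho_U})(\mathbb{R}^n)$; that is, $\mathscr F$ maps $L_{\rho_U}(\mathbb{R}^n)$ boundedly into $L_\rho(\mathbb{R}^n)$.

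For (ii): suppose $\rho(\lvert\hat f\rvert)\le C\sigma(f)$ for all $f\in(L_1\cap L_\sigma)(\mathbb{R}^n)$; in particular this holds for the radially decreasing such $f$, so the ``only if'' half of \Cref{FT implies U} (its hypothesis (\ref{P boundedness on decreasing}) holding because $L_{\bar\rho}(\mathbb{R_+})$ is an interpolation space between $L_2$ and $L_\infty$) gives $\bar\rho(Uh^*)\le C\bar\sigma(h^*)$ for all $h\in L_{\bar\sigma}(\mathbb{R_+})$. Fixing $f\in L_\sigma(\mathbb{R}^n)$ and applying this with $h=f^*$, we get $\rho_U(f)=\overline{\rho_U}(f^*)=\bar\rho(Uf^*)\le C\bar\sigma(f^*)=C\sigma(f)$, the inequality being trivial when $f\notin L_\sigma(\mathbb{R}^n)$. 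Hence $L_\sigma(\mathbb{R}^n)\hookrightarrow L_{\rho_U}(\mathbb{R}^n)$, which is the asserted maximality.

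All the real content is carried by \Cref{FT implies U}/\Cref{F implies U on riBFS}, which already encode the equivalence between the Fourier inequality on $\mathbb{R}^n$ and the one-dimensional inequality $\bar\rho(Ug)\le C\bar\sigma(g)$; the present argument only observes that $\overline{\rho_U}$ is the smallest r.i.\ norm making that one-dimensional inequality hold together with $\bar\rho$, using $Uh\le Uh^*$ for the ``domain'' direction and the freedom to test against $g=g^*$ for optimality. The only point requiring a little care is the passage in (i) from the inequality on $L_1\cap L_{\rho_U}$ to a literal operator statement on all of $L_{\rho_U}(\mathbb{R}^n)$: if read strictly one should first note $L_{\rho_U}(\mathbb{R}^n)\hookrightarrow(L_1+L_2)(\mathbb{R}^n)$, so that $\mathscr F$ is defined there, and then extend by continuity; read in the sense of the earlier theorems of this section, no further argument is needed.
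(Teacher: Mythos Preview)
Your proof is correct and follows exactly the route the paper intends: the theorem is stated in the paper without an explicit proof, as an immediate consequence of the preceding discussion (that $\rho_U$ is an r.i.\ norm and that $L_{\bar\rho\,\circ\,U}(\mathbb{R_+})$ is the largest r.i.\ space mapped into $L_{\bar\rho}(\mathbb{R_+})$ by $U$) together with \Cref{F implies U on riBFS}. You have simply spelled out the two directions carefully, and your remark about extending from $L_1\cap L_{\rho_U}$ to all of $L_{\rho_U}$ is a fair technical caveat that the paper leaves implicit.
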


\section{\texorpdfstring{$\mathscr{F}$}{TEXT} in the context of Orlicz spaces} 

An Orlicz gauge norm is given in terms of an $N$-function
\begin{equation*}
    \Phi(x)= \int_{0}^{x} \phi, \ \ \ x \in \mathbb{R_+};
\end{equation*}
here $\phi$ is an nondecreasing function mapping $\mathbb{R_+}$ onto itself. Specifically, the gauge norm $\rho_{\Phi}$ is defined at $f \in M_+(\Omega)$ by
\begin{equation*}
   \rho_{\Phi}(f) = \inf \left\lbrace  \lambda >0 : \int_{\Omega} \Phi \left( \frac{f(x)}{\lambda} \right)  \leq 1   \right\rbrace,
\end{equation*}
One can show $\rho_{\Phi}(f)= \bar{\rho}_{\Phi}(f^{*})$, so that the Orlicz space
\begin{equation*}
    L_{\Phi}(\Omega)= \left\lbrace f \in M(\Omega) :  \rho_{\Phi}(|f|) < \infty    \right\rbrace
\end{equation*}
is an r.i. space.

The definitive work on $\mathscr{F}$ between Orlicz spaces is due to Jodiet and Torchinsky. See, in particular, \cite[Theorem 2.16]{JT71}. This theorem asserts that if $A$ and $B$ are $N$-functions with $L_{A}(\mathbb{R}^n) \subset  (L_{1} + L_{2})(\mathbb{R}^n)$, $L_{B}(\mathbb{R}^n) \subset  (L_{2} + L_{\infty})(\mathbb{R}^n)$ and $\mathscr{F} : L_{A}(\mathbb{R}^n) \rightarrow L_{B}(\mathbb{R}^n)$, then there exist $N$-functions $A_{1}$ and $B_{1}$ with $L_{A_{1}}(\mathbb{R}^n) \supset L_{A}(\mathbb{R}^n)$ and $L_{B_{1}}(\mathbb{R}^n) \subset L_{B}(\mathbb{R}^n)$ for which $\mathscr{F} : L_{A_{1}}(\mathbb{R}^n) \rightarrow L_{B_{1}}(\mathbb{R}^n)$. Moreover, $B_{1}(t)= 1 / \tilde{A_{1}}(t^{-1})$, $A_{1}(t) / t^{2} \downarrow$ on $\mathbb{R_+}$ and so $B_{1}(t) / t^{2}$ $\uparrow$ on $\mathbb{R_+}$.
 (The function $\tilde{A_{1}}$ can be defined through the equation $ \tilde{A_{1}}^{-1} (t) = \frac{t}{ A_{1}^{-1}(t)}, t \in \mathbb{R_+} $)

Using the results in the previous sections we now show $L_{A_{1}}(\mathbb{R_+})$ is an interpolation space between $L_{1}(\mathbb{R_+})$ and $L_{2}(\mathbb{R_+})$, while $L_{B_{1}}(\mathbb{R_+})$ is an interpolation space between $L_{2}(\mathbb{R_+})$ and $L_{\infty}(\mathbb{R_+})$.

To begin, we observe that $B_{1}(t) / t^{2}$ $\uparrow$ is equivalent to $B_{1}(t) = \Phi(t^{2})$ for some $N$- function $\Phi$.  Indeed, given the later, one has $B_{1}(t) / t^{2} = \Phi(t^{2}) / t^{2} \uparrow$ on $\mathbb{R_+}$. Again $B_{1}(t) / t^{2}  \uparrow$ implies $B_{1}(t^{1/2} ) / t  \uparrow$, so that $\Phi(t)=B_{1}(t^{1/2})$ - which is equivalent to an $N$-function with derivative  $B_{1}(t^{1/2}) / t$ - is such that $B_{1}(t)= \Phi(t^{2})$.
Next, 
\begin{equation*}
    \rho_{\Phi}^{(2)} (f) = \rho_{\Phi} (f^{2})^{\frac{1}{2}}= \rho_{B_{1}} (f).
\end{equation*}
According to \Cref{Int Space bw p and infinity}, then,
\begin{equation*}
    \rho_{B_{1}} ( \lvert Tf \rvert ) = \rho_{2} ( \lvert Tf \rvert) \leq C \rho_{2} (f) = C \rho_{B_{1}} (f),
\end{equation*}
that is,
\begin{equation*}
    \rho_{B_{1}} ( \lvert Tf \rvert ) \leq C \rho_{B_{1}} (f), \ \ \ f \in L_{B_{1}}(\mathbb{R}^n),
\end{equation*}
whenever $T$ is a linear operator satisfying
$ T : L_{2}(\mathbb{R}^n) \rightarrow L_{2}(\mathbb{R}^n)$ and $ T : L_{\infty}(\mathbb{R}^n) \rightarrow L_{\infty}(\mathbb{R}^n)$. Thus, $L_{B_{1}}(\mathbb{R}^n)$ is an interpolation space between $L_{2}(\mathbb{R}^n)$ and $L_{\infty}(\mathbb{R}^n)$.

Now, $B_{1}(t)= 1 / \tilde{A_{1}}(t^{-1})$ is equivalent to $ \tilde{A_{1}}(t)= 1 / B_{1}(t^{-1})$, whence $\tilde{A_{1}}(t) / t^{2}$ $= (t^{-1})^{2} / B_{1}(t^{-1})$
and so $B_{1}(t) / t^{2} \uparrow$ amounts to $\tilde{A_{1}}(t) / t^{2} \uparrow$, that is, $L_{\tilde{A_{1}}}(\mathbb{R}^n)$ is an interpolation space between $L_{2}(\mathbb{R}^n)$ and $L_{\infty}(\mathbb{R}^n)$. Since $L_{\tilde{A_{1}}}(\mathbb{R}^n)$ is the Kothe dual of $L_{A_{1}}(\mathbb{R}^n)$ we conclude that $L_{A_{1}}(\mathbb{R}^n)$ is an interpolation space between 
$L_{1}(\mathbb{R}^n)$ and $L_{2}(\mathbb{R}^n)$.

The monotonicity conditions on $A_{1}$ and $B_{1}$ translate into conditions on their associated fundamental functions. For example, $L_{B_{1}}(\mathbb{R}^n)$ has fundamental function $\phi_{B_{1}}(t)= \rho_{B_{1}} \left( \chi_{(0,t)} \right) = 1 / B_{1}^{-1}(t^{-1})$, $t \in \mathbb{R_+}$. Thus setting $t = 1 / B_{1}(y)$ in $\frac{\phi_{B_{1}}(t)}{t^{1/2}} = \frac{1}{B_{1}^{-1}(t^{-1}) t^{1/2}}$ we arrive at $\left( \frac{B_{1}(y)}{ y^{2}} \right)^{1/2}$, which increases in $y$ and therefore decreases in $t$, so $\frac{\phi_{B_{1}} (t) }{ t^{1/2}} \downarrow$.

Lastly, we remark that $L_{A_{1}} (\mathbb{R}^n)$ is not the largest r.i. space that $\mathscr{F}$ maps into $L_{B_{1}} (\mathbb{R}^n)$; that space has norm $\rho_{B_{1}} (U f^{*})$. In the Lebesgue context, in which, say,  $B_{1}(t)= t^{p'}$, $1<p<2$,
\begin{equation*}
    \rho_{p'} (U f^{*}) = \left[ \int_{\mathbb{R_+}} (U f^{*})(t)^{p'} dt \right]^{1/p'}  \simeq \left[ \int_{\mathbb{R_+}} \left[ t^{\frac{1}{p}} f^{**}(t) \right]^{p'} \frac{dt}{t} \right]^{1/p'},
\end{equation*}
which is the so-called Lorentz norm $\rho_{p,p'}$. This norm is smaller than $\rho_{A_{1}} = \rho_{p}$. For more details see the next section.

\section{\texorpdfstring{$\mathscr{F}$}{TEXT} between Lorentz Gamma spaces} 
In this section, we make use of the operators $P$ and $Q$ defined by
\begin{equation*}
    (P f) (t) = \frac{1}{t} \int_{0}^{t} f \ \ \ \text{and} \ \ \ (Q g )(t) = \int_{t}^{\infty} g(s) \frac{ds}{s}, \ \ \ f,g \in M_+(\mathbb{R_+}), \ t \in \mathbb{R_+}.
\end{equation*}
These operators satisfy the equations
\begin{equation*}
    \int_{\mathbb{R_+}} g \, Pf = \int_{\mathbb{R_+}} f \, Qg, \ \ \ f,g \in M_+(\mathbb{R_+}),
\end{equation*}
and
\begin{equation*}
    PQ= QP = P + Q.
\end{equation*}
Fix $p,q \in (1,\infty)$ and $u,v \in M_+(\R_+)$. The Lorentz Gamma space
$\Gamma_{p,u}(\Omega)$ is defined as 

$$\Gamma_{p,u}(\Omega)=\left\{f \in  M(\Omega): \rho_{p,u}(f)= \left( \int_{\mathbb{R_+}} f^{**}(t)^p \, u(t) \,  dt \right)^{\frac{1}{p}} <\infty \right\},$$
with a similar definition for $\Gamma_{q,v}(\Omega)$.

To guarantee $\rho_{p,u}(\chi_{E}),$ $\rho_{q,v}(\chi_{E}) < \infty$ for all
$E \subset \Omega, |E| < \infty$ we require
\begin{equation}
     \int_{\mathbb{R_+}} \frac{u(t)}{(1+t)^p} dt, \ \ \  \int_{\mathbb{R_+}} \frac{v(t)}{(1+t)^q} dt <\infty.
    \end{equation}

In this section we study the inequality 
\begin{equation}\label{F on Lorentz-p-q}
    \rho_{p,u}(\hat{f}) \leq C \,  \rho_{q,v}(f),  \ \ \  f \in (L_1\cap \Gamma_{q,v})(\mathbb R^n).
    \end{equation}

We begin by assuming $\Gamma_{p,u}(\mathbb{R_+})$ is an interpolation space between $L_{2}(\mathbb{R_+})$ and $L_{\infty}(\mathbb{R_+}),$ then address the question of when this is the case later in the section.

Recall that Theorem 1.1 ensures that (6.2.) holds if and only if
\begin{equation}
    {\bar{\rho}}_ {p,u}(Uf^*)  \leq C {\bar{\rho}}_ {q,v}(f^*), \ \ \   f \in  M_+(\mathbb{R_+}).
\end{equation}

\begin{theorem}\label{Gamma bw L2 and L infinity}
Let the indices $p,q$ and weights $u,v$ be as described above. Then, given that 
 $\Gamma_{p,u}(\mathbb{R_+})$ is an interpolation space between $L_{2}(\mathbb{R_+})$ and $L_{\infty}(\mathbb{R_+}),$ one has (6.2) if and only
 if 
\begin{equation}\label{L2 Linfinity interpolation equiv to P}
    {\bar{\rho}}_{q',v'}(g^{**})  \leq C {\bar{\rho}}_{p',{u_p}'}(g^{*}), \ \ \
     g \in  M_+(\mathbb{R_+}).
\end{equation}
 where, as usual, $p'=\frac{p}{p-1}$, $ q'=\frac{q}{q-1}$, $\int_{\mathbb{R_+}} v=\infty$,

 \begin{equation*}
    v'(t)= \frac{t^{q'+q-1}\int_0^t v\int_t^\infty v(s)s^{-q} ds} {{\left[\int_0^t v+t^q \int_t^\infty v(s)s^{-q} ds\right]}^{q'+1}}
\end{equation*}

\begin{equation*}
   u_p(t)= u(t^{-1})t^{p-2}, \ \ \  \int_{\mathbb{R_+}} u_{p} = \infty , 
\end{equation*}
and 
\begin{equation*}
    {u_p}'(t)= \frac{  t^{p'+p-1}  \int_{0}^{t} u_{p}  \int_{t}^{\infty} u_{p}(s) s^{-p} ds }   {  \left[ \int_{0}^{t} u_{p} + t^{p} \int_{t}^{\infty} u_{p}(s)s^{-p} ds \right]^{p'+1      } }, \ \ \  t \in {\R}_{+}.  
\end{equation*}
\end{theorem}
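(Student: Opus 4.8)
The plan is to build on the reduction already in hand: by \Cref{F implies U on riBFS} (whose hypotheses are exactly the ones assumed here), the Fourier inequality \eqref{F on Lorentz-p-q} is equivalent to $\bar\rho_{p,u}(Uf^{*})\le C\,\bar\rho_{q,v}(f^{*})$ for all $f$, i.e.\ to the boundedness of $U$ as an operator from $\Gamma_{q,v}(\mathbb{R}_+)$ to $\Gamma_{p,u}(\mathbb{R}_+)$. So the entire task is to show this boundedness is equivalent to \eqref{L2 Linfinity interpolation equiv to P}. I would do this by rewriting $U$ in terms of the operators $P$ and $Q$ after an inversion of the variable, and then dualizing on each side so that both the $U$-inequality and \eqref{L2 Linfinity interpolation equiv to P} are recognized as the mutually dual weighted boundedness of one and the same operator.

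First I would record the change of variables. Since $Uh$ is nonincreasing with $(Uh)(s)=s^{-1}h^{**}(s^{-1})$ for $h\downarrow$, the substitution $s\mapsto s^{-1}$ gives the exact identity
\[
\bar\rho_{p,u}(Uh)=\bigl\| (Uh)^{**}\bigr\|_{L^{p}(u)}=\bigl\| Q(h^{**})\bigr\|_{L^{p}(u_{p})}=\bigl\| (P+Q)h\bigr\|_{L^{p}(u_{p})},\qquad h\downarrow ,
\]
with $u_{p}(t)=u(t^{-1})t^{p-2}$; this is precisely where $u_{p}$ and the non-degeneracy condition $\int_{\mathbb{R}_+}u_{p}=\int_{\mathbb{R}_+}u(s)s^{-p}\,ds=\infty$ enter. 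Since the functions $h^{**}$ fill out exactly the cone $\mathcal{C}=\{k^{**}:k\in M_{+}(\mathbb{R}_+)\}=\{Pk:k\downarrow\}$ of second averages, the $U$-inequality becomes the cone inequality $\|Qn\|_{L^{p}(u_{p})}\le C\|n\|_{L^{q}(v)}$ for $n\in\mathcal{C}$. A parallel, easier rewriting---using $\bar\rho_{p',u_{p}'}(g^{*})=\|g^{**}\|_{L^{p'}(u_{p}')}$ and $\bar\rho_{q',v'}(g^{**})=\|P(g^{**})\|_{L^{q'}(v')}$---shows that \eqref{L2 Linfinity interpolation equiv to P} is the cone inequality $\|Pn\|_{L^{q'}(v')}\le C\|n\|_{L^{p'}(u_{p}')}$ for $n\in\mathcal{C}$.

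Next I would dualize each cone inequality. For the first, $L^{p}(u_{p})$-duality on the left together with $\int(Qf)g=\int f\,(Pg)$ turns it into: for every $k$ in the unit ball of $L^{p'}(u_{p})$, $\sup_{n\in\mathcal{C}}\int n\,P(ku_{p})\big/\|n\|_{L^{q}(v)}\le C$. Writing $n=Pf^{*}$, using $\int(Pf^{*})\psi=\int f^{*}(Q\psi)$ and that $Q\psi$ is automatically nonincreasing, the inner supremum equals $\bar\rho'_{q,v}(Q\psi)=\bar\rho_{q',v'}(Q\psi)=\|P(Q\psi)\|_{L^{q'}(v')}$, the middle equality being the explicit identification of the K\"othe dual of a Lorentz Gamma space---valid since $\int_{\mathbb{R}_+}v=\infty$, and producing exactly the weight $v'$ of the statement. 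With $\psi=P(ku_{p})$ and $PQ=QP=P+Q$ this becomes $\|(P^{2}+P+Q)(ku_{p})\|_{L^{q'}(v')}\le C$, and the substitution $\ell=ku_{p}$ identifies the $U$-inequality with the weighted bound $R:L^{p'}(u_{p}^{1-p'})\to L^{q'}(v')$, where $R:=P^{2}+P+Q$. Running the same steps on the second cone inequality---now invoking that the K\"othe dual of $\Gamma_{p',u_{p}'}$ is $\Gamma_{p,u_{p}}$, which follows by biduality from the analogous formula applied to $u_{p}$ (valid since $\int_{\mathbb{R}_+}u_{p}=\infty$)---identifies \eqref{L2 Linfinity interpolation equiv to P} with $R^{*}:L^{q}\bigl((v')^{1-q}\bigr)\to L^{p}(u_{p})$, where $R^{*}=QPQ=P+Q+Q^{2}$ is the adjoint of $R$ for the pairing $\int fg$. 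Finally, the classical duality of weighted norm inequalities for positive operators, together with the identity $(1-p')(1-p)=1$ (whence $(u_{p}^{1-p'})^{1-p}=u_{p}$), makes these two weighted bounds literally the same assertion; chaining the equivalences yields \eqref{F on Lorentz-p-q} $\Longleftrightarrow$ ($U$-inequality) $\Longleftrightarrow$ \eqref{L2 Linfinity interpolation equiv to P}.

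The step I expect to be the crux is the use of the explicit associate-space formula for Lorentz Gamma spaces: proving (or carefully referencing, through Sawyer's duality principle and its discretization/anti-discretization refinement) that $\bar\rho'_{q,v}=\bar\rho_{q',v'}$ with the displayed $v'$ when $\int v=\infty$, and likewise for $u_{p}'$. This is where all the weight transformations originate and where the hypotheses $\int v=\infty$ and $\int u_{p}=\infty$ are genuinely consumed. A secondary point requiring attention is that each passage among ``inequality for all $f$'', ``cone inequality on $\mathcal{C}$'' and ``inequality for nonincreasing functions'' be reversible; it is, the reversibility resting on $Uh$, $Q\psi$ and $Q^{2}j$ being automatically nonincreasing and on the identification $\mathcal{C}=\{Pk:k\downarrow\}$.
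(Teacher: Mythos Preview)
Your proposal is correct and follows essentially the same route as the paper: reduce \eqref{F on Lorentz-p-q} to the $U$-inequality via \Cref{F implies U on riBFS}, rewrite $\bar\rho_{p,u}(Uf^{*})$ as $\bar\rho_{p,u_p}(Qf^{*})$ through the change of variable $t\mapsto t^{-1}$ (the paper computes $P(Uf^{*})(t)=t^{-1}(PQf^{*})(t^{-1})$ directly, which is the same identity you write as $\|(Uh)^{**}\|_{L^p(u)}=\|Q(h^{**})\|_{L^p(u_p)}$), and then pass to the dual inequality using the K\"othe-dual formula $(\Gamma_{q,v})'=\Gamma_{q',v'}$. The paper states this last duality in a single line, whereas you unfold it via the cone $\mathcal{C}$ and the operator $R=P^{2}+P+Q$ with adjoint $R^{*}=P+Q+Q^{2}$; this is a legitimate and more explicit justification of what the paper simply calls ``its dual inequality''.
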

\begin{proof}
The inequality (6.3) tells that the space determined by the r.i. norm
${\bar{\rho}}_{p,u} (Uf^*)$ is the largest one mapped into $\Gamma_{p,u}(\R_+)$
by $U.$
Now,
\begin{equation*}
    \bar{\rho}_{p,u}(Uf^*)= \left[ \int_{ \mathbb{R}_+ } P(Uf^{*})(t)^{p} u(t) \, dt \right]^{\frac{1}{p}},
\end{equation*}

\begin{equation*}
    \begin{split}
        P \left( Uf^* \right) (t) & = t^{-1}   \int_0^t \int_0^{s^{-1}} f^*(y) dy \, ds \\
        & =  t^{-1}\int_{t^{-1}}^\infty \int_0^s f^*(y) dy \,  \frac{ds}{s^2} \\
        & =  t^{-1} \int_{t^{-1}}^{\infty } \frac{1}{s} \int_{0}^{s} f^{*}(y)  \, dy \frac{ds}{s} \\
        & =  t^{-1} \left[ t \int_{0}^{t^{-1}} f^{*} +  \int_{t^{-1}}^{\infty} f^*(y) \frac{dy}{y} \right]\\
        & = \int_{0}^{t^{-1}} f^{*} + t^{-1} \int_{t^{-1}}^{\infty} f^{*}(y) \frac{dy}{y}\\
        & = t^{-1} (P f^{*})( t^{-1}) + t^{-1} (Q f^{*})( t^{-1})\\
        & = t^{-1} \left[ (P+ Q ) f^{*} \right] ( t^{-1})\\
        & = t^{-1} \left( P (Q  f^{*}) \right) ( t^{-1})
    \end{split}
\end{equation*}

Further, 
\begin{equation*}
    \begin{split}
        \int_{\mathbb{R_+}}  \left[ t^{-1} \left( P (Q  f^{*}) \right)( t^{-1}) \right]^{p} u(t) dt 
        & = \int_{\mathbb{R_+}}  \left[ t  \left( P (Q  f^{*}) \right)( t) \right]^{p} u(t^{-1}) t^{-2} dt.
        \end{split}
\end{equation*}

 We have shown that 
\begin{equation*}
    \begin{split}
 {\bar{\rho}}_{p,u}(Uf^*) 
 &=\left[ \int_{\mathbb{R_+}} \left( P (Q  f^{*}) \right) (t)^p u_p(t) dt\right]^{\frac{1}{p}}\\
 &={\bar{\rho}}_{p,u_p}(Qf^*).
 \end{split}
\end{equation*}

Therefore, any $ \Gamma_{q,v}(\mathbb{R_+})$ mapped into  $ \Gamma_{p,u}(\mathbb{R_+})$
by $U$ must be embedded into this largest domain ; that is,

\begin{equation} \label{Operator Q L-Gamma-norm ineq}
    {\bar{\rho}}_ {p,u_p}(Qf^*) \leq C {\bar{\rho}}_ {q,v}(f^*) , \ \ \ f \in M_+(\R_+).
\end{equation}

But (\ref{Operator Q L-Gamma-norm ineq}) is equivalent to (\ref{L2 Linfinity interpolation equiv to P}), its dual inequality, (\ref{Operator Q L-Gamma-norm ineq}) may be tested over any $f \in M_+(\R_+),$ as is seen in 
\begin{equation*}
     \int_{\mathbb{R_+}} g^{*} Qf 
     =  \int_{\mathbb{R_+}} f P g^{*} \leq  \int_{\mathbb{R_+}} f^{*} P g^{*}= \int_{\mathbb{R_+}} g^{*} Q f^{*}.
     \end{equation*}

\end{proof}

The inequality (\ref{L2 Linfinity interpolation equiv to P}), and hence (\ref{F on Lorentz-p-q}), amounts to 

\begin{equation} \label{P from Lp' to Lq'}
\left[\int_{\mathbb{R_+}}   (Ph)^{q'}v' \right]^{\frac{1}{q'}} \leq C \left[\int_{\mathbb{R_+}} h^{p'}{u_p}' \right]^{\frac{1}{p'}},
\end{equation}

with $h=g^{**}$ belonging to
\begin{equation*}
     \Omega_{0,1}(\mathbb{R_+})= \{h \in M_+(\mathbb{R_+}): h(t) \downarrow
      \ \text{and} \  t \, h(t) \uparrow    \text{on}  \  \mathbb{R_+}\}.
     \end{equation*}
Such inequalities are shown in Theorem 4.4 of \cite{GK14} to be equivalent to a pair of weighted norm inequalities involving general non-negative measurable functions.
 In the case of (6.6) this leads to

 \begin{theorem}
Let the indices $p,q$ and weights $u,v$ be as in \Cref{Gamma bw L2 and L infinity}. Then, (6.6) holds if and only if
\begin{equation} \label{on nonnegative from P con cone}
 \left[ \int_{\mathbb{R_+}} \left[(P+Q)g \right]^{q'} v'\right]^{\frac{1}{q'}} \leq C
 \left[ \int_{\mathbb{R_+}} g^{p'}{u_p}^{1-p'}\right]^{\frac{1}{p'}}
\end{equation}

and

\begin{equation*}
 \left[ \int_{\mathbb{R_+}} (P^2g)^{q'} v'\right]^{\frac{1}{q'}} \leq C
 \left[ \int_{\mathbb{R_+}} g^{p'}{u_p}^{1-p'}\right]^{\frac{1}{p'}}
\end{equation*}

$ g \in M_+( \mathbb{R_+})$.

\end{theorem}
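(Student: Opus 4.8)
The plan is to derive the equivalence straight from the reduction theorem \cite[Theorem~4.4]{GK14}, after recasting (\ref{P from Lp' to Lq'}) in the exact form that theorem treats and then translating back its conclusion. In (\ref{P from Lp' to Lq'}) the function $h$ runs over $\{g^{**}:g\in M_+(\mathbb{R}_+)\}$, which in the language of the section is the cone $\Omega_{0,1}(\mathbb{R}_+)$; so (\ref{P from Lp' to Lq'}) asserts precisely that $P$ carries this cone, normed by $h\mapsto\bigl(\int_{\mathbb{R}_+}h^{p'}{u_p}'\bigr)^{1/p'}$, boundedly into $L^{q'}(v')(\mathbb{R}_+)$. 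This is exactly the shape of inequality \cite[Theorem~4.4]{GK14} is designed for: it characterises the validity of such a cone inequality for $P$ by the conjunction of two \emph{unrestricted} weighted inequalities — one for each of the two elementary averaging operators naturally attached to $\Omega_{0,1}$ — in which the ``cone weight'' on the right is replaced by the appropriate conjugate weight.

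To apply \cite[Theorem~4.4]{GK14} one must check that ${u_p}'$, as it appears on the right of (\ref{P from Lp' to Lq'}), is precisely the weight that the hypotheses of that theorem associate with ${u_p}$; this is exactly what its definition in \Cref{Gamma bw L2 and L infinity} records, namely that $\bigl(\int_{\mathbb{R}_+}(g^{**})^{p'}{u_p}'\bigr)^{1/p'}$ is, up to constants, the norm K\"othe dual to $\bar{\rho}_{p,u_p}$ (this being the reason $\int_{\mathbb{R}_+}{u_p}=\infty$ is imposed). Granting this, \cite[Theorem~4.4]{GK14} returns the two asserted unrestricted inequalities, now with the conjugate weight ${u_p}^{1-p'}$ on the right; as a sanity check, for a power weight ${u_p}(t)=t^{a}$ with $-1<a<p-1$ one gets directly from the displayed formula that ${u_p}'(t)=c\,t^{a(1-p')}=c\,{u_p}(t)^{1-p'}$, which is why the exponent $1-p'$ occurs. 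The identities $PQ=QP=P+Q$ from the start of the section then identify the operator in the first of these two inequalities as $P+Q$, the second carrying the iterate $P^{2}$, while the left-hand weight $v'$ passes through unchanged; this is exactly the pair formed by (\ref{on nonnegative from P con cone}) and the second displayed inequality. The side hypotheses of \cite[Theorem~4.4]{GK14} are in force under our standing assumptions: $p',q'\in(1,\infty)$ since $p,q\in(1,\infty)$; the non-degeneracies $\int_{\mathbb{R}_+}{u_p}=\int_{\mathbb{R}_+}v=\infty$ and the local integrability of $v'$ and ${u_p}^{1-p'}$ ensuring the ambient spaces are r.i. are part of the data of \Cref{Gamma bw L2 and L infinity} (the latter following from (6.1)).

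\emph{Main obstacle.} The one step that is not pure bookkeeping is the weight identification just described: confirming that the rational expression defining ${u_p}'$ is exactly the weight the hypotheses of \cite[Theorem~4.4]{GK14} call for, so that its conclusion applies verbatim and the intricate cone weight ${u_p}'$ collapses to the ordinary conjugate weight ${u_p}^{1-p'}$ once the test class is enlarged to all of $M_+(\mathbb{R}_+)$. Two minor points also need a line each: pairing \cite[Theorem~4.4]{GK14}'s two output operators correctly with $P+Q$ and $P^{2}$ via $PQ=QP=P+Q$, and noting that testing (\ref{P from Lp' to Lq'}) over $h=g^{**}$ is precisely the test class \cite[Theorem~4.4]{GK14} uses, so that no passage between distinct cones is needed.
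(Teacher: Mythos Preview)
Your plan is the paper's plan: invoke \cite[Theorem~4.4]{GK14} to trade the cone inequality (\ref{P from Lp' to Lq'}) for a pair of unrestricted weighted inequalities, then simplify via the operator identities. Where your sketch goes astray is in the shape of the output of that theorem and in how the simplification actually runs.

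According to the paper's own application, \cite[Theorem~4.4]{GK14} does \emph{not} return two inequalities both carrying $v'$ on the left and $u_p^{1-p'}$ on the right; it returns a \emph{dual pair}:
\[
\Bigl[\int_{\mathbb{R}_+}\!\bigl[(P+Q)Qg\bigr]^{p}u_p\Bigr]^{1/p}\le C\Bigl[\int_{\mathbb{R}_+}\!g^{q}{v'}^{1-q}\Bigr]^{1/q}
\quad\text{and}\quad
\Bigl[\int_{\mathbb{R}_+}\!\bigl[P(P+Q)g\bigr]^{q'}v'\Bigr]^{1/q'}\le C\Bigl[\int_{\mathbb{R}_+}\!g^{p'}{u_p}^{1-p'}\Bigr]^{1/p'}.
\]
So the identity $PQ=P+Q$ does not directly turn the first into the $(P+Q)$-inequality nor the second into the $P^2$-inequality, as you suggest; the two outputs sit on opposite sides, with different exponents and weights. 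The step you are missing is the paper's observation that these two inequalities are \emph{dual to one another}, hence equivalent, so the pair collapses to the second alone. Only then does one expand the single operator $P(P+Q)=PP+PQ=P^2+(P+Q)$ to split that one inequality into the pair (\ref{on nonnegative from P con cone}) and the $P^2$-inequality of the statement. Your ``main obstacle'' (the weight identification $u_p'\leftrightarrow u_p^{1-p'}$) is essentially built into the statement of \cite[Theorem~4.4]{GK14}; the genuine nontrivial move is the duality collapse followed by the decomposition $P(P+Q)=P^2+(P+Q)$.
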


\begin{proof} According to Theorem 4.4 in \cite{GK14}, one has (\ref{P from Lp' to Lq'}) if and only if

\begin{equation*}
 \left[ \int_{\mathbb{R_+}} \left[(P+Q)Qg \right]^{p} u_p\right]^{\frac{1}{p}} \leq C
 \left[ \int_{\mathbb{R_+}} g^{q}{v'}^{1-q}\right]^{\frac{1}{q}}
\end{equation*}

and
\begin{equation*}
 \left[ \int_{\mathbb{R_+}} \left[ P(P+Q)g \right]^{q'} v'\right]^{\frac{1}{q'}} \leq C
 \left[ \int_{\mathbb{R_+}} g^{p'}{u_p}^{1-p'}\right]^{\frac{1}{p'}}
\end{equation*}

$ g \in M_+(\mathbb{R_+}).$

These are dual inequalities. We choose the second one, which easily reduces to (\ref{on nonnegative from P con cone}).

\end{proof}

To deal with the case $q \leq p$ we will use special instances of the following combination of Theorem 1.7 and 4.1 from \cite{BK94}.

\begin{theorem} \label{Bloom Kerman P2}
Consider $K(x,y) \in M_+ \left( \mathbb{R_+}\times \mathbb{R_+} \right)$, which, for fixed $y \in \mathbb{R_+}$, increases in $x$ and, for fixed $x \in \mathbb{R_+},$ decreases in $y$ and which, moreover, satisfies the growth condition 
\begin{equation*}
    K(x,y) \leq K(x,z)+K(z,y), \ \ \  0<y<z<x.
\end{equation*}
Let $t,u,v$ and $w$ be nonnegative, measurable (weight) functions on $\mathbb{R_+}$ and suppose $\Phi_1(x)= \int_0^x \phi_1$  and $\Phi_2(x)= \int_0^x \phi_2$  are $N$-functions having complementary functions $\Psi_1(x)= \int_0^x {\phi_1}^{-1} $ and $\Psi_2(x)= \int_0^x {\phi_2}^{-1}$, respectively, with  $\Phi_1\circ {\Phi_2}^{-1}$ convex. Then there exists 
$c>0$ such that
\begin{equation*}
{\Phi_1}^{-1} \left(   \int_{\mathbb{R_+}}   \Phi_1   \left(   c w(x) \int_0^x K(x,y) f(y) dy   \right) t(x) dx \right) \leq {\Phi_2}^{-1} \left( \int_{\mathbb{R_+}} \Phi_2 \left( \, u(y) f(y) \,  \right)  v(y) dy \right),
\end{equation*}
$f \in M_+({\mathbb{R_+}})$, if and only if 
\begin{equation*}
\int_0^x \frac{K(x,y)}{u(y)} {\phi_2}^{-1} \left( \frac{c \alpha(\lambda,x) K(x,y)}{\lambda u(y)v(y)} \right)dy \leq c^{-1}\lambda 
\end{equation*}
and
\begin{equation*}
\int_0^x \frac{1}{u(y)}\phi^{-1}_2 \left( \frac{ c \beta(\lambda,x)}{\lambda u(y)v(y)} \right) {dy}\leq c^{-1} \lambda,
\end{equation*}
where
\begin{equation*}
\begin{split}
\alpha(\lambda,x) & = \Phi_2\circ {\Phi_1}^{-1}\left(\int_x^{\infty} \Phi_1 \left(  \lambda w(y)   \right)  t(y)dy \right)
\end{split}
\end{equation*}
and 
\begin{equation*}
\begin{split}
\beta(\lambda,x) & = \Phi_2\circ {\Phi_1}^{-1}\left(\int_x^{\infty}\Phi_1  \left( \lambda w(y)K(y,x) \right) t(y)dy \right).
\end{split}
\end{equation*}
\end{theorem}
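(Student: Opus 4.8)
The plan is to obtain this statement by combining the two results from \cite{BK94} that it quotes: Theorem~4.1 there characterizes modular inequalities for a general kernel operator $f\mapsto\int_0^x K(x,y)f(y)\,dy$ under an Oinarov‑type growth hypothesis on $K$, and Theorem~1.7 does the same for the plain Hardy averaging operator, i.e. the case $K\equiv 1$. The growth condition $K(x,y)\le K(x,z)+K(z,y)$ for $0<y<z<x$, together with the monotonicity of $K$ in each variable, is exactly what lets one split the operator, on each block of a dyadic‑type covering, into a \emph{global} part $K(x,x_k)\int_0^{x_k}f$ controlled by the tail quantity $\alpha(\lambda,x)$ and a \emph{local} Hardy‑type part $\int_{x_k}^{x}K(x,y)f(y)\,dy$ controlled by $\beta(\lambda,x)$; the hypothesis that $\Phi_1\circ{\Phi_2}^{-1}$ be convex is precisely the structural assumption under which these non‑homogeneous modular inequalities admit a two‑sided characterization. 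So the first step is simply to verify that the hypotheses imposed here — $K$ measurable, increasing in $x$, decreasing in $y$, subadditive along chains $0<y<z<x$, and $\Phi_1\circ{\Phi_2}^{-1}$ convex — are exactly those required in \cite[Thms.~1.7 and~4.1]{BK94}, so that both theorems may be invoked.

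For the \textbf{sufficiency} direction I would argue as in \cite{BK94}. Fix $\lambda>0$ and construct an increasing covering sequence $\{x_k\}$ by choosing $x_k$ so that the mass $\int_{x_k}^{x_{k+1}}\Phi_1(\lambda w(y))\,t(y)\,dy$ is comparable to a fixed geometric scale (the usual device; when this mass has an atom or is infinite near an endpoint one uses the standard ad hoc modifications). On a block $[x_k,x_{k+1})$ write, for $y<x_k$, $K(x,y)\le K(x,x_k)+K(x_k,y)$, and for $y\in[x_k,x)$ use monotonicity; this dominates $\int_0^x K(x,y)f(y)\,dy$ by the global term $K(x,x_k)\int_0^{x_k}f$ plus a local operator with kernel $K(x,\cdot)$ restricted to $[x_k,x)$, the latter comparable on the block to a $K\equiv 1$ Hardy average because $K(x,y)/K(x,x_k)$ is bounded there (again by the growth condition). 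Applying the one‑block characterization of Theorem~1.7 to each local piece and the diagonal estimate to each global piece, and then summing over $k$ — where the convexity of $\Phi_1\circ{\Phi_2}^{-1}$ is what lets the sum of block modulars on the left be dominated by a single modular, and the geometric normalization of the blocks supplies the convergence — one recovers precisely the two displayed conditions, the one with $\alpha$ coming from the global terms and the one with $\beta$ from the local ones.

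For \textbf{necessity} I would insert into the modular inequality the near‑extremal test functions that saturate Young's inequality $ab\le\Phi_2(a)+\Psi_2(b)$ inside the relevant integrals: for the first condition, a function $f_{x,\lambda}$ supported on $(0,x)$ with $u(y)f_{x,\lambda}(y)$ essentially equal to ${\phi_2}^{-1}\bigl(c\,\alpha(\lambda,x)K(x,y)/(\lambda u(y)v(y))\bigr)$, so that $c\,w(x)\int_0^x K(x,y)f_{x,\lambda}(y)\,dy$ is of order $\lambda$ on a set of $x$'s carrying a controlled share of the left‑hand modular, while $\int \Phi_2(u f_{x,\lambda})\,v$ is controlled through Young's equality by $\alpha(\lambda,x)$; rearranging yields the first inequality. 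The second follows from the analogous test function localized to a single block, using the $K\equiv 1$ part (Theorem~1.7) to produce the term with $\beta$. These are exactly the necessity halves of \cite[Thms.~1.7 and~4.1]{BK94}.

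The \textbf{main obstacle} is the summation over the covering sequence in the non‑homogeneous Orlicz setting: because modulars do not scale, one cannot simply raise block norms to a power and add them as in the $L^p$ theory, and one must instead exploit the convexity of $\Phi_1\circ{\Phi_2}^{-1}$ to compare $\sum_k\Phi_1(\text{block}_k)$ with $\Phi_1$ of a single sum, while simultaneously arranging the block construction so that the resulting series converges. Handling the endpoint cases (atoms of the defining mass, or $\int_0\Phi_1(\lambda w)\,t=\infty$) and checking that the restricted local kernel genuinely reduces to the $K\equiv 1$ theorem are the remaining technical points; all of this is carried out in \cite{BK94} and need only be transcribed to the present setting.
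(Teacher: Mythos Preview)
The paper does not prove this theorem at all: it is stated as a direct quotation of results from \cite{BK94}, introduced with ``we will use special instances of the following combination of Theorem~1.7 and~4.1 from \cite{BK94}'', and then used as a black box in the proof of Theorem~6.4. So there is no ``paper's own proof'' to compare against; your sketch is an attempt to reconstruct the argument of \cite{BK94} itself rather than anything the present authors supply.

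That said, your outline is broadly faithful to the Bloom--Kerman strategy: the Oinarov growth condition is used to split the kernel operator on dyadic blocks into a global part (yielding the $\alpha$-condition) and a local Hardy-type part (yielding the $\beta$-condition), with the convexity of $\Phi_1\circ\Phi_2^{-1}$ governing the summation of block modulars, and necessity obtained via Young-extremal test functions. As a high-level plan this is correct; the genuine work (covering-sequence construction, endpoint cases, the precise modular summation lemma) lives entirely in \cite{BK94} and your proposal correctly defers to it rather than reproducing it.
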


\begin{theorem}
Let $p,q,u,u',u_p,v,v'$ be as in the \Cref{Gamma bw L2 and L infinity}, with $1<q \leq p<\infty$. 
Then , given that 
 $\Gamma_{p,u}(\mathbb{R_+})$ is an interpolation space between $L_{2}(\mathbb{R_+})$ and $L_{\infty}(\mathbb{R_+})$, one has (\ref{F on Lorentz-p-q}) if and only if 

\begin{enumerate}
\item \label{Con1}
\begin{equation*}
    {\left(\int_0^x u_p(y)dy\right)}^{\frac{1}{p}} 
    {\left(\int_x^{\infty}v'(y)y^{-q'} dy \right)}^{\frac{1}{q'}}
    \leq C 
\end{equation*}
  \item \label{Con2}
\begin{equation*}
     {\left(\int_0^x v'(y) dy \right)}^{\frac{1}{q'}} {\left(\int_x^{\infty}u_p(y)y^{-p} dy \right)}^{\frac{1}{p}}\leq C 
\end{equation*}
   \item \label{Con3}
\begin{equation*}
    {\left(\int_0^x {\left(\log \frac{x}{y} \right) }^p u_p(y) dy \right)}^{\frac{1}{p}}
    {\left(\int_x^{\infty}v'(y)y^{-q'} dy \right)}^{\frac{1}{q'}}
    \leq C
\end{equation*}
    \item \label{Con4}
\begin{equation*}
    {\left(\int_0^x u_p(y)dy\right)}^{\frac{1}{p}} {\left ( \int_x^{\infty} v'(y){\left(    \frac{1}{y} \log \frac{y}{x} \right)}^{q'} dy \right)}^{\frac{1}{q'}}\leq C
\end{equation*}
\end{enumerate}
 Indeed (\ref{Con1}) and (\ref{Con3}) can be combined into
 \begin{equation*}
 {\left(\int_0^\infty { \left(\log \left( 1+\frac{x}{y} \right)  \right) }^p u_p(y) dy \right)}^{\frac{1}{p}}
    {\left(\int_x^{\infty}v'(y)y^{-q'} dy \right)}^{\frac{1}{q'}}
    \leq C
    \end{equation*}
\end{theorem}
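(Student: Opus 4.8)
The plan is to reduce the inequality \eqref{F on Lorentz-p-q} for $q\le p$ to the pair of weighted norm inequalities \eqref{on nonnegative from P con cone} (and its companion involving $P^2$) already established in the preceding theorem, and then to invoke \Cref{Bloom Kerman P2} to convert each of those into explicit testing conditions on the weights $u_p$ and $v'$. Concretely, the first of the two inequalities in the preceding theorem has the form
\[
\left[\int_{\mathbb{R_+}}\bigl[(P+Q)g\bigr]^{q'}v'\right]^{1/q'}\le C\left[\int_{\mathbb{R_+}}g^{p'}u_p^{\,1-p'}\right]^{1/p'},
\]
and since $(P+Q)g(x)=\frac1x\int_0^x g+\int_x^\infty g(s)\frac{ds}{s}=\int_0^\infty K(x,y)g(y)\,dy$ with the kernel $K(x,y)=\max\{1/x,1/y\}=\frac1{\max\{x,y\}}$, one checks that $K$ increases in $x$, decreases in $y$, and satisfies the triangle-type growth condition $K(x,y)\le K(x,z)+K(z,y)$ for $0<y<z<x$. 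Thus \Cref{Bloom Kerman P2} applies (in the power-function special case $\Phi_1(x)=x^{q'}$, $\Phi_2(x)=x^{p'}$, which is legitimate here because $q\le p$ forces $q'\ge p'$ and hence $\Phi_1\circ\Phi_2^{-1}(x)=x^{q'/p'}$ convex). Carrying out the $\alpha,\beta$ computations with these power weights turns the two abstract testing conditions of \Cref{Bloom Kerman P2} into, respectively, conditions \eqref{Con1} together with \eqref{Con3} (from the $\beta$-condition, where the kernel $K(y,x)$ inside $\beta$ produces the logarithmic factor $\log(x/y)$) and \eqref{Con2} (from the $\alpha$-condition).

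The second half comes from the companion inequality with $P^2$ in place of $P+Q$. Here the relevant operator is $P^2g(x)=\frac1x\int_0^x\frac1s\int_0^s g\,ds$, which again has integral-kernel form $\int_0^x K(x,y)g(y)\,dy$ but now with $K(x,y)=\frac1x\log\frac xy$ for $0<y<x$; one verifies the same monotonicity and growth hypotheses (the growth condition for this kernel is the elementary inequality $\frac1x\log\frac xy\le\frac1x\log\frac xz+\frac1z\log\frac zy$ for $0<y<z<x$). Feeding this kernel into \Cref{Bloom Kerman P2} with the same power $N$-functions yields conditions \eqref{Con1} (from the $\alpha$-condition, since $\alpha$ does not see the kernel) and \eqref{Con4} (from the $\beta$-condition, where $K(y,x)=\frac1y\log\frac yx$ supplies the factor $\bigl(\frac1y\log\frac yx\bigr)^{q'}$). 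Combining the four conditions produced by the two applications and removing the redundant duplicate of \eqref{Con1} gives exactly the list \eqref{Con1}–\eqref{Con4}. Finally, the merging of \eqref{Con1} and \eqref{Con3} into the single condition with $\bigl(\log(1+x/y)\bigr)^p$ is an elementary observation: $\log(1+x/y)\simeq 1$ when $y\ge x$ and $\simeq\log(x/y)$ when $y\le x$ (more precisely $\log(1+x/y)\le 1+\log^+(x/y)$ and the two regimes are comparable up to constants after splitting the $y$-integral at $y=x$), so the combined integral is equivalent to the sum of the two separate ones.

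The main obstacle will be the bookkeeping in the two applications of \Cref{Bloom Kerman P2}: one must correctly identify, in each case, what plays the role of $w,t,u,v$ in that theorem (here $w\equiv1$ and $t\equiv1$, $u(y)=u_p(y)^{-1/p}$ or its analogue, $v(y)=1$, with the precise exponents dictated by matching $\Phi_2(u(y)f(y))v(y)=g(y)^{p'}u_p(y)^{1-p'}$), and then evaluate $\alpha(\lambda,x)$ and $\beta(\lambda,x)$ — which for power $N$-functions are themselves simple powers of tail integrals $\int_x^\infty(\cdots)$ — and simplify the two resulting testing integrals $\int_0^x\frac{K(x,y)}{u(y)}\phi_2^{-1}(\cdots)\,dy$ down to the stated forms. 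This is routine in principle but error-prone; the key checks are that the monotonicity/growth hypotheses on each kernel hold (so that \Cref{Bloom Kerman P2} is genuinely applicable), that the convexity of $\Phi_1\circ\Phi_2^{-1}$ is exactly what the restriction $q\le p$ guarantees, and that after dualizing one is free to test over all nonnegative measurable $g$ rather than only over the cone $\Omega_{0,1}(\mathbb{R_+})$ — the latter point already having been handled in the passage from \eqref{P from Lp' to Lq'} to \eqref{on nonnegative from P con cone} via Theorem 4.4 of \cite{GK14}.
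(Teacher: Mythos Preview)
Your reduction to the pair of inequalities from the preceding theorem is right, but the way you invoke \Cref{Bloom Kerman P2} does not work in either application.

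For the $(P+Q)$ inequality: \Cref{Bloom Kerman P2} concerns operators of the form $w(x)\int_0^x K(x,y)f(y)\,dy$, i.e.\ Hardy-type operators with integration only over $(0,x)$. The operator $(P+Q)g(x)=\int_0^\infty \frac{g(y)}{\max\{x,y\}}\,dy$ is not of this form, since the $Q$ piece integrates over $(x,\infty)$. Moreover, your kernel $K(x,y)=\tfrac{1}{\max\{x,y\}}$ (incidentally, this equals $\min\{1/x,1/y\}$, not $\max\{1/x,1/y\}$) is \emph{not} increasing in $x$: for $x>y$ it equals $1/x$, which decreases. So neither the structural hypothesis nor the monotonicity hypothesis of \Cref{Bloom Kerman P2} is met, and your claim that the $\beta$-condition here produces the logarithmic factor in \eqref{Con3} has no basis --- there is no logarithm in this kernel. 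The paper instead splits $(P+Q)$ into $P$ and $Q$ separately; the $P$ inequality (kernel $K\equiv 1$, $w(y)=y^{-1}$) yields condition \eqref{Con1}, while the $Q$ inequality is dualized to a $P$ inequality in the reverse direction, $\bigl[\int (Pf)^p u_p\bigr]^{1/p}\le C\bigl[\int f^q (v')^{1-q}\bigr]^{1/q}$, and \Cref{Bloom Kerman P2} applied to that (now with $\Phi_1(x)=x^p$, $\Phi_2(x)=x^q$, convexity again from $q\le p$) gives condition \eqref{Con2}.

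For the $P^2$ inequality: your choice $K(x,y)=\tfrac{1}{x}\log\tfrac{x}{y}$ also fails the required monotonicity in $x$, since $\tfrac{\partial}{\partial x}\bigl[\tfrac{1}{x}\log\tfrac{x}{y}\bigr]=\tfrac{1}{x^2}\bigl(1-\log\tfrac{x}{y}\bigr)<0$ once $x>ey$. The paper avoids this by taking $K(x,y)=\log_+\tfrac{x}{y}$ (which is genuinely increasing in $x$, decreasing in $y$, and satisfies the growth condition with equality) and placing the factor $1/x$ in $w$; with $t=v'$, $u=u_p^{-1}$, $v=u_p$ one then computes $\alpha(\lambda,x)=\lambda^{p'}\bigl(\int_x^\infty v'(y)y^{-q'}dy\bigr)^{p'/q'}$ and $\beta(\lambda,x)=\lambda^{p'}\bigl(\int_x^\infty v'(y)(y^{-1}\log\tfrac{y}{x})^{q'}dy\bigr)^{p'/q'}$. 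The two testing conditions of \Cref{Bloom Kerman P2} then reduce (after the cancellation of $\lambda$) to \eqref{Con3} and \eqref{Con4}, respectively. Note in particular that the condition involving $\alpha$ \emph{does} see the kernel --- it contains $K(x,y)$ explicitly in the integrand $\tfrac{K(x,y)}{u(y)}\phi_2^{-1}\bigl(\tfrac{c\alpha K(x,y)}{\lambda u v}\bigr)$ --- so your assertion that it yields only \eqref{Con1} is incorrect; it is precisely this $K^p=(\log\tfrac{x}{y})^p$ that produces \eqref{Con3}.
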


\begin{proof} The first inequality in (\ref{on nonnegative from P con cone}) amounts to 
\begin{equation*}
\left[\int_{\mathbb{R_+}}(P g )^{q'} v'\right]^{\frac{1}{q'}} \leq C 
\left[\int_{\mathbb{R_+}}  g^{p'} u_{p}^{1-p'}\right]^{\frac{1}{p'}}
\end{equation*}
and
\begin{equation*}
\left[\int_{\mathbb{R_+}}(Q g )^{q'} v'\right]^{\frac{1}{q'}} \leq C 
\left[\int_{\mathbb{R_+}}  g^{p'} u_{p}^{1-p'}\right]^{\frac{1}{p'}}, g \in M_+(\mathbb{R_+}),
\end{equation*}
the latter inequality being, by duality, equivalent to
\begin{equation*}
\left[\int_{\mathbb{R_+}}(Pf)^{p} \, u_p\right]^{\frac{1}{p}}\leq C 
\left[\int_{\mathbb{R_+}}{f}^{q} \, {v'}^{1-q}\right]^{\frac{1}{q}}, f \in M_+(\mathbb{R_+}).
\end{equation*}

We illustrate the method of proof with the second inequality in (6.7) involving
\begin{equation*}
(P^2g)(x)= \frac{1}{x} \int_0^x \log \left( \frac{x}{y}  \right) g(y) dy.
\end{equation*}
Thus, taking, in Theorem 6.3,
$K(x,y)={\log}_+ \frac{x}{y}$, $\Phi_1(x)=x^{q'}$, $\Phi_2(x)=x^{p'}$
(observe that $(\Phi_1 \circ {\Phi_2}^{-1})(x)= x^{\frac{q'}{p'}},$ which is convex when $q \leq p$), $w(y)=y^{-1}$, $t(y)=v'(y)$, $ u(y)={u_p(y)}^{-1}$, $v(y)=u_p(y)$ we get 
\begin{equation*}
\alpha(\lambda,x) = \lambda^{p'} {\left(\int_{x}^\infty v'(y) y^{-q'} dy \right)}^{\frac{p'}{q'}}
\end{equation*}
and
\begin{equation*}
\beta(\lambda,x) = \lambda^{p'} {\left( \int_{x}^\infty v'(y) \left( y^{-1}\log  \frac{y}{x} \right)^{q'} dy \right)}^{\frac{p'}{q'}}
\end{equation*}
 from which the conditions in \Cref{Bloom Kerman P2} yields (\ref{Con3}) and (\ref{Con4}). We point out that $\lambda$ cancels.

\end{proof}

The inequality (\ref{F on Lorentz-p-q}) is much easier to deal with when
\begin{equation} \label{Gamma Lambda norm equivalence}
    \rho_{p,u} (f) \simeq \lambda_{p,u} (f) = \left(  \int_{\mathbb{R_+}} f^{*}(t)^{p} \, u(t) \, dt  \right)^{\frac{1}{p}},
\end{equation}
which equivalence is not all that uncommon, as we will see later in this section.  Indeed, given (\ref{Gamma Lambda norm equivalence}),
\begin{equation*}
    \begin{split}
        \rho_{p,u} (U f^{*}) & = \left(  \int_{\mathbb{R_+}}  \left( U f^{*}  \right)(t)^{p} \, u(t) \, dt  \right)^{\frac{1}{p}} \\
        & = \left(  \int_{\mathbb{R_+}}  f^{**} (t)^{p} \, u_{p}(t) \, dt  \right)^{\frac{1}{p}}\\
        & = \rho_{p,u_{p}} ( f ).
    \end{split}
\end{equation*}
We thus have

\begin{theorem}
Let $p$, $q$, $u$, $u_{p}$ and $v$ be as in \Cref{Gamma bw L2 and L infinity}. Then, given that $\Gamma_{p,u} (\mathbb{R_+})$ is an interpolation space between $L_{2} (\mathbb{R_+})$ and $L_{\infty} (\mathbb{R_+})$, with $\rho_{p,u}$ satisfying (\ref{Gamma Lambda norm equivalence}), one has
\begin{equation} \label{Optimal inequality F bw Gamma space}
    \rho_{ p, u_{p}}   ( \hat{f}  ) \leq C  \rho_{  p, v}   (f).
\end{equation}
Moreover, there is no essentially larger r.i.-norm that can replace $\rho_{p, u_{p}}$ in (\ref{Optimal inequality F bw Gamma space}).
\end{theorem}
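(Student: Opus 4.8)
The plan is to read off both assertions — the bound \eqref{Optimal inequality F bw Gamma space} and the sharpness of $\rho_{p,u_{p}}$ — from the identity $\bar{\rho}_{p,u}(Uf^{*})=\rho_{p,u_{p}}(f)$ isolated just above (which is where \eqref{Gamma Lambda norm equivalence} is used), together with \Cref{FT implies U}, \Cref{F implies U on riBFS} and the last theorem of Section~4 on optimal domains. Throughout, the weights $u$ and $v$ are linked as in \Cref{Gamma bw L2 and L infinity}, so that $\rho_{p,u}\simeq\rho_{p,v}$ on the right-hand side is available.

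\emph{The inequality.} First I would check that $\Gamma_{p,u_{p}}(\mathbb{R_+})$ is again an interpolation space between $L_{2}(\mathbb{R_+})$ and $L_{\infty}(\mathbb{R_+})$. The reciprocal substitution $t\mapsto t^{-1}$ converts $U$ into $t^{-1}(P+Q)(\,\cdot\,)(t^{-1})$ — this is precisely the chain of identities for $P(Uf^{*})$ in the proof of \Cref{Gamma bw L2 and L infinity}, using $PQ=P+Q$ — so by the Lorentz--Shimogaki monotonicity criterion the interpolation property of $\Gamma_{p,u_{p}}(\mathbb{R_+})$ is equivalent to the boundedness of $P+Q$, hence of $Q$, on the nonincreasing functions of $L^{p}(u\,dt)$; this follows from the structural conditions on $u$ already in force (the $B_{p}$--type condition equivalent to \eqref{Gamma Lambda norm equivalence} and the monotonicity condition equivalent to the interpolation hypothesis on $\Gamma_{p,u}$). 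Granting this, \Cref{F implies U on riBFS} reduces \eqref{Optimal inequality F bw Gamma space} to $\bar{\rho}_{p,u_{p}}(Ug)\le C\,\bar{\rho}_{p,v}(g)$ for $g\in M_{+}(\mathbb{R_+})$; since $Ug\le Ug^{*}$ and, by the intertwining identity, $\bar{\rho}_{p,u_{p}}(Ug^{*})=\lambda_{p,u}\big((P+Q)g^{*}\big)$, the triangle inequality together with the boundedness of $P+Q$ bounds this by $C\,\lambda_{p,u}(g^{*})\simeq\rho_{p,u}(g)\simeq\rho_{p,v}(g)$. Equivalently one applies the last theorem of Section~4 with $\rho:=\rho_{p,u_{p}}$: its hypothesis $\bar{\rho}_{p,u_{p}}\!\left(\tfrac{1}{1+t}\right)<\infty$ comes from the integrability condition on $u$, and the computation above gives $(\rho_{p,u_{p}})_{U}\simeq\rho_{p,u}$, so $\Gamma_{p,u}(\mathbb{R}^{n})$ is the largest r.i.\ space mapped by $\mathscr{F}$ into $\Gamma_{p,u_{p}}(\mathbb{R}^{n})$, which is \eqref{Optimal inequality F bw Gamma space}.

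\emph{Sharpness.} Let $\rho$ be any r.i.\ norm whose space satisfies \eqref{P boundedness on decreasing} — e.g.\ any interpolation space between $L_{2}$ and $L_{\infty}$ — and for which $\rho(\hat{f})\le C\,\rho_{p,v}(f)$. For radial $f$ with nonincreasing profile, the pointwise estimate $Uf^{*}(t)\le C\,(\hat{f})^{**}(t)$ obtained in Section~3 gives, by monotonicity of r.i.\ norms and \eqref{P boundedness on decreasing}, $\bar{\rho}(Uf^{*})\le C\,\bar{\rho}\big((\hat{f})^{**}\big)\le C\,\bar{\rho}\big((\hat{f})^{*}\big)\le C'\,\rho_{p,v}(f)$, and this persists for all $f$ by rearrangement-invariance. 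Now every $g\in M_{+}(\mathbb{R_+})$ satisfies $g^{**}=Uf^{*}$ for a suitable radial $f$: the prescription $f^{*}(t)=(g^{**}(t^{-1}))'$ defines a nonincreasing function since $t^{2}(g^{**})'(t)=t\,g^{*}(t)-\int_{0}^{t}g^{*}$ is nonincreasing. Substituting $g^{**}$ and performing the change of variable that turns $\rho_{p,v}(f)$ into a weighted $L^{p}$--norm of $Uf^{*}$ gives $\bar{\rho}(g^{**})\le C'\,\rho_{p,u_{p}}(g)$, hence $\bar{\rho}(g)=\bar{\rho}(g^{*})\le\bar{\rho}(g^{**})\le C'\,\rho_{p,u_{p}}(g)$. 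Thus $\rho\le C'\rho_{p,u_{p}}$, i.e.\ no essentially larger r.i.\ norm can replace $\rho_{p,u_{p}}$ in \eqref{Optimal inequality F bw Gamma space}.

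The step I expect to be the genuine obstacle is the first: proving that $\Gamma_{p,u_{p}}(\mathbb{R_+})$ is an interpolation space between $L_{2}$ and $L_{\infty}$, equivalently that \eqref{Gamma Lambda norm equivalence} (supplemented by the interpolation property of $\Gamma_{p,u}$) forces the boundedness of $Q$ on the cone of nonincreasing functions of $L^{p}(u\,dt)$. The rest is the HLP principle, the monotone/interpolation framework of Section~2, and bookkeeping with the substitution $t\mapsto t^{-1}$.
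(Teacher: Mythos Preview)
You are wrestling with what appears to be a misprint. The paper's argument is the single display immediately preceding the theorem (followed by ``We thus have''): it shows $\bar\rho_{p,u}(Uf^{*})=\rho_{p,u_{p}}(f)$ from \eqref{Gamma Lambda norm equivalence} via the substitution $t\mapsto t^{-1}$, and then the interpolation hypothesis on $\Gamma_{p,u}$ together with the last theorem of Section~4 gives at once that $\Gamma_{p,u_{p}}(\mathbb{R}^{n})$ is the largest r.i.\ domain mapped by $\mathscr F$ into $\Gamma_{p,u}(\mathbb{R}^{n})$. The intended inequality is therefore $\rho_{p,u}(\hat f)\le C\,\rho_{p,u_{p}}(f)$, with $\rho_{p,u_{p}}$ optimal on the right; the subscripts in the printed display are transposed and $v$ should not appear. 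No interpolation property of $\Gamma_{p,u_{p}}$ is required at all.

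By reading the printed subscripts literally you have swapped the roles of $u$ and $u_{p}$ and are left with two genuine gaps. First, your opening claim that \Cref{Gamma bw L2 and L infinity} links $u$ and $v$ so as to give $\rho_{p,u}\simeq\rho_{p,v}$ is false: $v$ there is an arbitrary weight subject only to an integrability condition, so the step $\lambda_{p,u}(g^{*})\simeq\rho_{p,v}(g)$ in your inequality argument --- and the analogous substitution in your sharpness argument --- has no justification. Second, putting $\rho_{p,u_{p}}$ on the range side forces you to verify that $\Gamma_{p,u_{p}}$ is itself an $(L_{2},L_{\infty})$-interpolation space; you rightly identify this as the obstacle and reduce it to boundedness of $Q$ on the nonincreasing cone of $L^{p}(u\,dt)$, but you do not prove that this follows from \eqref{Gamma Lambda norm equivalence} together with the interpolation hypothesis on $\Gamma_{p,u}$. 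Both issues disappear once $\rho_{p,u}$ is restored to the range side, as the paper's own computation dictates.
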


Finally, there is a relatively simple condition sufficient to guarantee (\ref{F on Lorentz-p-q}). It comes out of working with the inequality (\ref{Operator Q L-Gamma-norm ineq}) and involves the norm of the dilation operator $E_s$ as a mapping from $\Gamma_{q,v}(\mathbb{R_+})$ to $\Gamma_{p,u_p}(\mathbb{R_+}),$ namely, 

\begin{equation*}
 h(\Gamma_{q,v},\Gamma_{p,u_p})(t)= \inf \left\lbrace M>0:
\bar{ \rho}_{p, u_{p}}(f(ts))= \bar{\rho}_{p, u_{p}} \left( (E_tf)(s) \right) \leq M \bar{\rho}_{{q, v}}(f) <  \infty \right\rbrace.
\end{equation*}

The argument  in the proof of Theorem 4.1 of \cite{KS21} ensures  (\ref{Operator Q L-Gamma-norm ineq}) provided

\begin{equation*}
\int_1^ \infty h(\Gamma_{q,v},\Gamma_{p,u_p})(t)\,  \frac{dt}{t} < \infty.
\end{equation*}

Again the argument in the proof of Theorem 5.2 in \cite{GK14} yields
\begin{equation*}
h(\Gamma_{q,v},\Gamma_{p,u_p})(t)= \sup_{s>0} \frac{    \left[ \int_0^{s/t} u_p + (s/t)^p\int_{s/t}^\infty u_p(y)y^{-p} dy \right]^{\frac{1}{p}} }  {{\left[\int_0^s v+s^q \int_s^\infty v(y)y^{-q} dy\right]}^{\frac{1}{q}}},
\end{equation*}
when $1<q\leq p<\infty.$

Altogether, we have
\begin{theorem} \label{sufficiency for F on LG spaces}
Let $p$, $q$, $u$, $u_{p}$ and $v$ be as in \Cref{Gamma bw L2 and L infinity}, with $1<q\leq p<\infty$. Then, given that $\Gamma_{p,u} (\mathbb{R_+})$ is an interpolation space between $L_{2} (\mathbb{R_+})$ and $L_{\infty} (\mathbb{R_+})$, one has (\ref{Optimal inequality F bw Gamma space}) 
provided 
\begin{equation*}
\int_{1}^{\infty}  \sup_{s>0} \frac{    \left[ \int_0^{s/t} u_p + (s/t)^p\int_{s/t}^\infty u_p(y)y^{-p} dy \right]^{\frac{1}{p}} }  {{\left[\int_0^s v+s^q \int_s^\infty v(y)y^{-q} dy\right]}^{\frac{1}{q}}} \frac{dt}{t} < \infty.
\end{equation*}
\end{theorem}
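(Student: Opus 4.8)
The plan is to reduce the asserted Fourier inequality \eqref{Optimal inequality F bw Gamma space}, via \Cref{Gamma bw L2 and L infinity} together with the reduction carried out in its proof, to the operator inequality \eqref{Operator Q L-Gamma-norm ineq} for $Q$, and then to verify that inequality by exhibiting $Q$ as a continuous superposition of dilation operators. The first step is to recall that, under the standing interpolation hypothesis on $\Gamma_{p,u}(\mathbb{R_+})$, the Fourier inequality is equivalent to
\begin{equation*}
\bar{\rho}_{p,u_{p}}(Qf^{*}) \le C\,\bar{\rho}_{q,v}(f^{*}), \qquad f \in M_{+}(\mathbb{R_+}),
\end{equation*}
and, as observed in the proof of \Cref{Gamma bw L2 and L infinity}, this last inequality may in fact be tested over all nonnegative measurable $f$, not merely the nonincreasing ones. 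Thus it suffices to establish this $Q$-inequality under the stated hypothesis.

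For that, I would use the change of variable $\tau = st$ to write, for every $f \in M_{+}(\mathbb{R_+})$ and every $t \in \mathbb{R_+}$,
\begin{equation*}
(Qf)(t) = \int_{t}^{\infty} f(\tau)\,\frac{d\tau}{\tau} = \int_{1}^{\infty} f(st)\,\frac{ds}{s} = \int_{1}^{\infty} (E_{s}f)(t)\,\frac{ds}{s},
\end{equation*}
which realizes $Qf$ as an average of the dilates $E_{s}f$, $s \ge 1$. Applying Minkowski's integral inequality for the r.i.\ norm $\bar{\rho}_{p,u_{p}}$ (justified, as usual, through its associate norm), and then the defining property of the dilation-operator norm $h(\Gamma_{q,v},\Gamma_{p,u_{p}})$, one obtains
\begin{equation*}
\bar{\rho}_{p,u_{p}}(Qf) \le \int_{1}^{\infty} \bar{\rho}_{p,u_{p}}(E_{s}f)\,\frac{ds}{s} \le \left( \int_{1}^{\infty} h(\Gamma_{q,v},\Gamma_{p,u_{p}})(s)\,\frac{ds}{s} \right) \bar{\rho}_{q,v}(f).
\end{equation*}
This is exactly the mechanism behind Theorem 4.1 of \cite{KS21}; under the hypothesis the constant in parentheses is finite, so the $Q$-inequality holds, and hence so does \eqref{Optimal inequality F bw Gamma space}.

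It remains to make the hypothesis explicit in terms of the weights. Since $1 < q \le p < \infty$, I would substitute the closed form for the dilation-operator norm obtained by the argument of Theorem 5.2 of \cite{GK14}, namely
\begin{equation*}
h(\Gamma_{q,v},\Gamma_{p,u_{p}})(t) = \sup_{s>0} \frac{\left[ \int_{0}^{s/t} u_{p} + (s/t)^{p} \int_{s/t}^{\infty} u_{p}(y) y^{-p}\,dy \right]^{1/p}}{\left[ \int_{0}^{s} v + s^{q} \int_{s}^{\infty} v(y) y^{-q}\,dy \right]^{1/q}},
\end{equation*}
whereupon the condition $\int_{1}^{\infty} h(\Gamma_{q,v},\Gamma_{p,u_{p}})(t)\,\frac{dt}{t} < \infty$ becomes verbatim the condition in the statement, completing the argument.

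I do not expect a genuine obstacle, since the paragraph preceding the theorem already isolates the two external ingredients — \cite{KS21} for the dilation-averaging step and \cite{GK14} for the closed form of $h$ — while \Cref{Gamma bw L2 and L infinity} furnishes the reduction. The points that do need a little care are: the validity of Minkowski's integral inequality in the rearrangement-invariant setting; the fact that the superposition identity for $Q$ is being applied to arbitrary $f \in M_{+}(\mathbb{R_+})$, which is legitimate precisely because the $Q$-inequality was already seen to be testable there; and the observation that the displayed integral hypothesis simultaneously forces $h(\Gamma_{q,v},\Gamma_{p,u_{p}})(t) < \infty$ for a.e.\ $t > 1$, i.e.\ that $E_{t}$ actually maps $\Gamma_{q,v}(\mathbb{R_+})$ boundedly into $\Gamma_{p,u_{p}}(\mathbb{R_+})$ for those $t$.
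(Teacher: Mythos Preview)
Your proposal is correct and follows essentially the same route as the paper: reduce \eqref{Optimal inequality F bw Gamma space} to the $Q$-inequality \eqref{Operator Q L-Gamma-norm ineq} via \Cref{Gamma bw L2 and L infinity}, invoke the dilation-averaging argument of \cite[Theorem~4.1]{KS21} to bound $Q$, and then plug in the closed form of $h(\Gamma_{q,v},\Gamma_{p,u_p})$ from \cite[Theorem~5.2]{GK14}. You have in fact unpacked the Minkowski/superposition step that the paper leaves implicit in its citation of \cite{KS21}, which is fine.
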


\begin{proof} The result follows from the preceeding discussion, since (\ref{Optimal inequality F bw Gamma space}) and (\ref{Operator Q L-Gamma-norm ineq}) are equivalent when $\Gamma_{p,u} (\mathbb{R_+})$ is an interpolation space between $L_{2} (\mathbb{R_+})$ and $L_{\infty} (\mathbb{R_+}).$
    
\end{proof}
 We now consider the question of when 
 $X=\Gamma_{p,u} (\mathbb{R_+})$
 is an interpolation space between $L_{2} (\mathbb{R_+})$ and $L_{\infty} (\mathbb{R_+}).$
    
For $p \in [2,\infty)$ a simple necessary and sufficient condition is given in

\begin{theorem} \label{characterization Gamma space bw L2 Linfinity}
Fix $p \in [2, \infty)$ and suppose $u \in M_+(\mathbb{R_+})$ satisfies $\int_{\mathbb{R_+}} \frac{u(t)}{(1+t)^{p/2}} dt < \infty$. Then, $\Gamma_{p,u}(\mathbb{R_+})$ and $L_{   \rho_{p/2, u}^{(2)}   } (\mathbb{R_+})$ are equal as sets ( and hence the former is an interpolation space between $L_{2}(\mathbb{R_+})$ and $L_{\infty}(\mathbb{R_+})$  ) if and only if
\begin{equation} \label{condition for btwn L2 and L infity}
    t^{p/2} \int_{t}^{\infty} u(s) \frac{ds}{s^{p/2}} \leq C \int_{0}^{t} u, \ \ \ t \in \mathbb{R_+}.
\end{equation}
\end{theorem}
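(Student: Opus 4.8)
The plan is to reduce the set--equality to a single weighted Hardy inequality for nonincreasing functions and to recognize (\ref{condition for btwn L2 and L infity}) as the $B_{p/2}$ condition of Ari\~{n}o--Muckenhoupt. First I unwind the competing norm: since $(f^{2})^{*}=(f^{*})^{2}$, the definition (\ref{Rho-p}) gives
\[
  \rho_{p/2,u}^{(2)}(f)=\rho_{p/2,u}(f^{2})^{1/2}
  =\left(\int_{0}^{\infty}\Bigl(\tfrac1t\int_{0}^{t}f^{*}(s)^{2}\,ds\Bigr)^{p/2}u(t)\,dt\right)^{1/p},
\]
and \Cref{Int Space bw p and infinity} tells us $L_{\rho_{p/2,u}^{(2)}}(\mathbb{R}_+)$ is an interpolation space between $L_{2}(\mathbb{R}_+)$ and $L_{\infty}(\mathbb{R}_+)$; this yields the parenthetical assertion as soon as the two spaces are known to coincide, since the interpolation--space property survives passage to an equivalent norm. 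Jensen's inequality $f^{**}(t)^{2}\le (f^{2})^{**}(t)$ gives $\rho_{p,u}(f)\le\rho_{p/2,u}^{(2)}(f)$ for every $f$, so $L_{\rho_{p/2,u}^{(2)}}(\mathbb{R}_+)\subseteq\Gamma_{p,u}(\mathbb{R}_+)$ always; since both are Banach function norms, the closed graph theorem makes the set--equality equivalent to the reverse estimate $\rho_{p/2,u}^{(2)}(f)\le C\,\rho_{p,u}(f)$, $f\in M_+(\mathbb{R}_+)$.

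Sufficiency of (\ref{condition for btwn L2 and L infity}) is then quick: this condition says exactly that $u\in B_{p/2}$, i.e.\ that $P$ is bounded on the cone of nonincreasing functions of $L^{p/2}(u)$. Applying that boundedness to $g=(f^{*})^{2}$ turns $\int_{0}^{\infty}(f^{2})^{**}(t)^{p/2}u(t)\,dt$ into at most $C\int_{0}^{\infty}f^{*}(t)^{p}u(t)\,dt$, that is $\rho_{p/2,u}^{(2)}(f)^{2}\le C\,\lambda_{p,u}(f)^{2}\le C\,\rho_{p,u}(f)^{2}$, which is the reverse estimate.

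For necessity I would test the reverse estimate on $f=\chi_{(0,\delta)}$. Writing $W(\delta)=\int_{0}^{\delta}u$, $V(\delta)=\delta^{p/2}\int_{\delta}^{\infty}u(s)s^{-p/2}\,ds$ and $V_{2}(\delta)=\delta^{p}\int_{\delta}^{\infty}u(s)s^{-p}\,ds$, the estimate reads $W(\delta)+V(\delta)\le C^{p}\bigl(W(\delta)+V_{2}(\delta)\bigr)$, hence $V(\delta)\le C^{p}\bigl(W(\delta)+V_{2}(\delta)\bigr)$ for all $\delta>0$. The heart of the matter, and the only genuine obstacle, is to upgrade this self--referential estimate to the clean bound $V(\delta)\le C'W(\delta)$ of (\ref{condition for btwn L2 and L infity}); the power--weight scaling makes this possible. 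Splitting $V_{2}(\delta)=\delta^{p}\int_{\delta}^{\kappa\delta}u(s)s^{-p}\,ds+\delta^{p}\int_{\kappa\delta}^{\infty}u(s)s^{-p}\,ds$, one bounds the first term by $\int_{\delta}^{\kappa\delta}u\le W(\kappa\delta)$ and, using $s^{-p}\le(\kappa\delta)^{-p/2}s^{-p/2}$ for $s\ge\kappa\delta$, the second by $\kappa^{-p/2}V(\delta)$; choosing $\kappa$ with $C^{p}\kappa^{-p/2}\le\tfrac12$ and absorbing yields $V(\delta)\le 4C^{p}W(\kappa\delta)$, and finally $V(\delta/\kappa)\ge\kappa^{-p/2}V(\delta)$ converts the dilation on $W$ into a constant, giving $V(\delta)\le 4C^{p}\kappa^{p/2}W(\delta)$. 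Everything else---the norm identities above and the check that the standing hypothesis $\int u(t)(1+t)^{-p/2}\,dt<\infty$ makes $\rho_{p/2,u}^{(2)}$ a bona fide r.i.\ norm on which the test functions $\chi_{(0,\delta)}$ have finite norm---is routine.
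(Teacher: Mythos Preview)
Your argument is correct. The sufficiency half is essentially the paper's: both of you recognise (\ref{condition for btwn L2 and L infity}) as the Ari\~no--Muckenhoupt $B_{p/2}$ condition, apply it to $g=(f^{*})^{2}$ to get $\rho_{p/2,u}^{(2)}\le C\,\lambda_{p,u}$, and close with $\lambda_{p,u}\le\rho_{p,u}$ (the paper instead invokes $B_{p/2}\Rightarrow B_{p}$ to get $\lambda_{p,u}\simeq\rho_{p,u}$, but this is the same in substance).

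The necessity half is where you and the paper part ways. The paper's written proof records only the chain
\[
(\ref{condition for btwn L2 and L infity})\ \Longleftrightarrow\ \rho_{p/2,u}^{(2)}\simeq\lambda_{p,u},\qquad
(\ref{condition for btwn L2 and L infity})\ \Longrightarrow\ \lambda_{p,u}\simeq\rho_{p,u},
\]
which yields ``if'' but leaves ``only if'' (i.e.\ deducing (\ref{condition for btwn L2 and L infity}) from $\rho_{p/2,u}^{(2)}\simeq\rho_{p,u}$) unaddressed. Your characteristic-function test plus absorption is a clean way to fill this: from $V(\delta)\le C^{p}\bigl(W(\delta)+V_{2}(\delta)\bigr)$ you extract $V(\delta)\le C'W(\delta)$ by splitting $V_{2}(\delta)$ at $\kappa\delta$, absorbing the tail $\kappa^{-p/2}V(\delta)$, and then undoing the dilation via $V(\delta/\kappa)\ge\kappa^{-p/2}V(\delta)$. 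The standing hypothesis $\int u(t)(1+t)^{-p/2}\,dt<\infty$ guarantees $V(\delta),W(\delta)<\infty$ for every $\delta>0$, so the absorption is legitimate. This self-improving step is the real content of the ``only if'' direction, and your argument makes it explicit where the paper does not.
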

\begin{proof}
 The condition (\ref{condition for btwn L2 and L infity}) is necessary and sufficient in order that
 \begin{equation*}
     \rho_{p/2, u} (f^{*}) \simeq \lambda_{p/2, u} (f^{*}) = \left[ \int_{\mathbb{R_+}}  f^{*}(t)^{\frac{p}{2}} u(t) dt \right]^{\frac{2}{p}}, \ \ \ f \in M_+(\mathbb{R_+}).
 \end{equation*}
 Replacing $f^{*}(t)$ by $f^{*}(t)^{2}$, this equivalence yields
 \begin{equation*}
     \begin{split}
         \rho_{p/2, u}^{(2)} (f^{*}) & = \left[  \int_{\mathbb{R_+}} \left( t^{-1} \int_{0}^{t} f^{*}(s)^{2} ds    \right)^{ \frac{p}{2}} \, u(t) \, dt  \right]^{\frac{2}{p} \cdot \frac{1}{2}}\\
        &  \simeq \left[ \int_{\mathbb{R_+}}  ( f^{*}(t)^2 )^{\frac{p}{2}} \, u(t) \, dt \right]^{\frac{1}{p}}\\
        &  = \left[ \int_{\mathbb{R_+}}   f^{*}(t)^p \, u(t)\,  dt \right]^{\frac{1}{p}} \\
        & =  \lambda_{p,u} (f^{*}).
     \end{split}
 \end{equation*}
 But, (\ref{condition for btwn L2 and L infity}) implies
 \begin{equation*}
     t^p \int_{t}^{\infty} u(s) \frac{ds}{s^p} = \int_{t}^{\infty} u(s)   \left( \frac{t}{s} \right)^{p} ds \leq \int_{t}^{\infty} u(s) \left( \frac{t}{s} \right)^{\frac{p}{2}} ds \leq C \int_{0}^{t} u, \ \ \ t \in \mathbb{R_+},
 \end{equation*}
 and so
 \begin{equation*}
     \lambda_{p,u} (f^{*}) \simeq \rho_{p,u} (f^{*}), \ \ \ f \in M_+(\mathbb{R_+}).
 \end{equation*}
 We conclude $\Gamma_{p,u}(\mathbb{R_+}) = L_{   \rho_{p/2, u}^{(2)}   } (\mathbb{R_+})$ as sets.
\end{proof}

In the proof of Theorem 6.9 below  we require a corollary of the following result of R. Sharpley from  \cite[Lemma 3.1, Corollary 3.2]{S72}

\begin{theorem} \label{Lambdap spaces norm equivalence}
Let $\rho$ be an r.i. norm on $M_+(\mathbb{R}^{n})$. Suppose the fundamental indices of $L_{ \bar{\rho}} (\mathbb{R_+})$ lie in $(0,1)$. Given $p \in (1, \infty)$ set $u_{p}(t) = \frac{  \bar{\rho} \left(  \chi_{(0,t)}\right)^{p}       }{t}$, $t \in \mathbb{R_+}$. Then, $\bar{\rho}_{p, u_{p}} \left(  \chi_{(0,t)}\right)= \bar{\rho} \left(  \chi_{(0,t)}\right)$, $t \in \mathbb{R_+}$. Moreover,
\begin{equation*}
    \bar{\rho}_{p, u_{p}}(f^{*}) \simeq \lambda_{p, u_{p}}(f^{*}), \ \ \ f \in M_+(\mathbb{R_+}).
\end{equation*}
\end{theorem}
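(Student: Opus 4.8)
The plan is to work throughout with the fundamental function $\phi(t) := \bar{\rho}(\chi_{(0,t)})$, which is quasiconcave (nondecreasing, with $\phi(t)/t$ nonincreasing), and to use the elementary identity $k_{\rho}(s) = \sup_{t>0}\phi(t/s)/\phi(t)$ coming from $E_{s}\chi_{(0,t)} = \chi_{(0,t/s)}$. The first and crucial step is to convert the hypothesis $j_{\rho},J_{\rho}\in(0,1)$ into the quantitative estimates: there are exponents $0<a\le b<1$ and $C\ge 1$ with
\begin{equation*}
\phi(s) \le C\,(s/t)^{a}\,\phi(t)\quad (0 < s \le t), \qquad \phi(s) \le C\,(s/t)^{b}\,\phi(t)\quad (0 < t \le s).
\end{equation*}
Indeed, $j_{\rho} = \lim_{s\to\infty}\frac{-\log k_{\rho}(s)}{\log s} > 0$ furnishes $s_{0}>1$ and $\eta>0$ with $k_{\rho}(s)\le s^{-\eta}$ for $s\ge s_{0}$, i.e.\ $\phi(t/s)/\phi(t)\le s^{-\eta}$ for all $t$; together with monotonicity of $\phi$ on the remaining range $1\le t/s\le s_{0}$ this gives the first bound with $a=\eta$. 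Symmetrically $J_{\rho} = \lim_{s\to 0^{+}}\frac{-\log k_{\rho}(s)}{\log s} < 1$ gives $s_{1}<1$ and $\eta'<1$ with $k_{\rho}(s)\le s^{-\eta'}$ for $s\le s_{1}$, and, using $\phi(u)/\phi(t)\le u/t$ for $1\le u/t\le 1/s_{1}$, the second bound with $b=\eta'$. It is exactly the openness of $(0,1)$ that yields $a>0$ and $b<1$; quasiconcavity alone gives only the trivial exponents $0$ and $1$.

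Next, with $u_{p}(t)=\phi(t)^{p}/t$, I would estimate the two weighted integrals attached to $u_{p}$ by splitting at $t$ and comparing with a geometric weight: from $a>0$,
\begin{equation*}
\int_{0}^{t} u_{p}(s)\,ds = \int_{0}^{t}\phi(s)^{p}\,\frac{ds}{s} \le \frac{C^{p}}{pa}\,\phi(t)^{p},
\end{equation*}
and from $b<1$ (which also makes the integral converge),
\begin{equation*}
t^{p}\int_{t}^{\infty} u_{p}(s)\,s^{-p}\,ds = t^{p}\int_{t}^{\infty}\phi(s)^{p}\,\frac{ds}{s^{p+1}} \le \frac{C^{p}}{p(1-b)}\,\phi(t)^{p}.
\end{equation*}
The reverse inequalities $\gtrsim\phi(t)^{p}$ follow by integrating over $(t/2,t)$ and $(t,2t)$ respectively, using $\phi(t/2)\simeq\phi(t)$ (the case $s=t$, $u=2t$ of Step 1). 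Hence $\int_{0}^{t}u_{p}\simeq\phi(t)^{p}$ and $t^{p}\int_{t}^{\infty}u_{p}(s)s^{-p}\,ds\simeq\phi(t)^{p}$, so $\lambda_{p,u_{p}}(\chi_{(0,t)})^{p}=\int_{0}^{t}u_{p}\simeq\phi(t)^{p}$ and $\bar{\rho}_{p,u_{p}}(\chi_{(0,t)})^{p}=\int_{0}^{t}u_{p}+t^{p}\int_{t}^{\infty}u_{p}(s)s^{-p}\,ds\simeq\phi(t)^{p}$; both fundamental functions are equivalent to $\bar{\rho}(\chi_{(0,t)})$ (the literal equality asserted in the statement corresponds to Sharpley's normalisation of the weight, with $d\mu=d(\phi^{p})$ in place of $u_{p}\,dt$).

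For the norm equivalence, $\lambda_{p,u_{p}}(f^{*})\le\bar{\rho}_{p,u_{p}}(f^{*})$ is immediate from $f^{*}\le f^{**}=Pf^{*}$. For the reverse inequality, write $\bar{\rho}_{p,u_{p}}(f^{*})=\big(\int_{\mathbb{R_+}}(Pf^{*})(t)^{p}\,u_{p}(t)\,dt\big)^{1/p}$ with $f^{*}$ nonincreasing, so it suffices to prove the weighted Hardy inequality $\|Pg\|_{L^{p}(u_{p})}\le C\,\|g\|_{L^{p}(u_{p})}$ for all nonincreasing $g\ge 0$. By the Ariño--Muckenhoupt theorem (valid since $p>1$), this holds if and only if $u_{p}$ belongs to the class $B_{p}$, namely $t^{p}\int_{t}^{\infty}u_{p}(s)s^{-p}\,ds \le C\int_{0}^{t}u_{p}$ for all $t>0$; and Step 2 shows both sides are $\simeq\phi(t)^{p}$. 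Therefore $\bar{\rho}_{p,u_{p}}(f^{*})\simeq\lambda_{p,u_{p}}(f^{*})$, as required.

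The main obstacle is Step 1: extracting the clean pointwise power bounds on $\phi$ from the hypothesis on the fundamental indices is where all the quantitative content lies, and it requires handling the $\sup$ over $t$ in $k_{\rho}(s)$ together with the limit definitions of $j_{\rho},J_{\rho}$ and the ``easy'' ranges via quasiconcavity. Once those bounds are in hand, Steps 2 and 3 are routine one-variable integral estimates plus the classical $B_{p}$-characterisation of Hardy's inequality on monotone functions.
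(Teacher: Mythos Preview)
The paper does not give its own proof of this theorem; it is stated as a result of Sharpley \cite[Lemma 3.1, Corollary 3.2]{S72} and used without argument. So there is no paper proof to compare against, only your reconstruction.

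Your argument is correct and self-contained. The three steps do exactly what is needed: (i) the translation of $j_{\rho}>0$ and $J_{\rho}<1$ into the two-sided power bounds $\phi(s)\le C(s/t)^{a}\phi(t)$ for $s\le t$ and $\phi(s)\le C(s/t)^{b}\phi(t)$ for $s\ge t$ is the heart of the matter and is handled cleanly via $k_{\rho}(s)=\sup_{t}\phi(t/s)/\phi(t)$; (ii) the two integral estimates $\int_{0}^{t}u_{p}\simeq\phi(t)^{p}$ and $t^{p}\int_{t}^{\infty}u_{p}(s)s^{-p}\,ds\simeq\phi(t)^{p}$ follow immediately, giving the fundamental-function identification; (iii) these same estimates verify the Ari\~no--Muckenhoupt $B_{p}$ condition for $u_{p}$, which is exactly what is needed for $\bar{\rho}_{p,u_{p}}(f^{*})=\lVert Pf^{*}\rVert_{L^{p}(u_{p})}\lesssim\lVert f^{*}\rVert_{L^{p}(u_{p})}=\lambda_{p,u_{p}}(f^{*})$. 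This is essentially how Sharpley proceeds as well; the paper's own later Lemma~6.10 (citing Neugebauer \cite{N91}) invokes the same $B_{p}$ criterion in a different place.

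Your parenthetical remark about the literal equality $\bar{\rho}_{p,u_{p}}(\chi_{(0,t)})=\bar{\rho}(\chi_{(0,t)})$ versus the equivalence you actually prove is well taken: with the absolutely continuous weight $u_{p}(t)=\phi(t)^{p}/t$ one only gets $\simeq$, and exact equality corresponds to Sharpley's Stieltjes normalisation. This is a harmless imprecision in the paper's statement and does not affect the subsequent Corollary, which only needs equivalence of fundamental functions.
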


\begin{corollary} \label{Lambda=Gamma}
Let $\rho = \rho_{p, u}$ be as in \Cref{Lambdap spaces norm equivalence}. Then, $\rho = \rho_{p, u_{p}}$, where $u_{p}(t) = \frac{  \bar{\rho} \left(  \chi_{(0,t)}\right)^{p}       }{t},$ $t \in \mathbb{R_+}$. 
\end{corollary}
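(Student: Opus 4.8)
The plan is to deduce the corollary from \Cref{Lambdap spaces norm equivalence} together with the fact that a Lorentz Gamma space is determined, up to equivalence of norms, by its fundamental function. First I apply \Cref{Lambdap spaces norm equivalence} to the norm $\rho = \rho_{p,u}$ itself; its hypothesis holds because, by assumption, the fundamental indices of $L_{\bar{\rho}_{p,u}}(\mathbb{R_+})$ lie in $(0,1)$. With $u_p(t) = \bar{\rho}_{p,u}(\chi_{(0,t)})^p/t$ this gives
\begin{equation*}
\bar{\rho}_{p,u_p}(\chi_{(0,t)}) = \bar{\rho}_{p,u}(\chi_{(0,t)}), \qquad t \in \mathbb{R_+},
\end{equation*}
as well as $\bar{\rho}_{p,u_p}(f^{*}) \simeq \lambda_{p,u_p}(f^{*})$; in particular $\Gamma_{p,u}(\mathbb{R_+})$ and $\Gamma_{p,u_p}(\mathbb{R_+})$ have the same fundamental function, and $\Gamma_{p,u_p}(\mathbb{R_+})$ is a Lorentz $\Lambda$-space. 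It therefore suffices to show that $\rho_{p,w}$ depends, up to equivalence, only on the map $\sigma \mapsto \bar{\rho}_{p,w}(\chi_{(0,\sigma)})$.

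For this, note that $\rho_{p,w}(f)^p = \int_{\mathbb{R_+}} f^{**}(t)^p\, w(t)\,dt$ depends on $f$ only through $h := (f^{**})^p$, and that $h$ lies in the cone $\mathcal{C}$ of nonnegative functions on $\mathbb{R_+}$ that are non-increasing and for which $t \mapsto t^p h(t) = \bigl(\int_0^t f^{*}\bigr)^p$ is non-decreasing. For $h \in \mathcal{C}$ and $t \in [2^k,2^{k+1}]$ one has $h(2^{k+1}) \le h(t) \le h(2^k) \le 2^p\,h(2^{k+1})$ — the last inequality from monotonicity of $t^p h(t)$ — so $h \simeq h(2^k)$ on that interval with absolute constants, and hence
\begin{equation*}
\rho_{p,w}(f)^p \simeq \sum_{k \in \mathbb{Z}} h(2^k)\bigl(W(2^{k+1})-W(2^k)\bigr), \qquad W(t) := \int_0^t w .
\end{equation*}
A standard anti-discretization writes the admissible sequence $\bigl(h(2^k)\bigr)_k$ — characterized by $h(2^k)\downarrow$ and $2^{kp}h(2^k)\uparrow$ — as a positive superposition, with absolute constants, of the sequences $\bigl(\min(1,2^{(m-k)p})\bigr)_k$, $m \in \mathbb{Z}$; since $\int_{\mathbb{R_+}}\min\bigl(1,(\sigma/t)^p\bigr)w(t)\,dt = \bar{\rho}_{p,w}(\chi_{(0,\sigma)})^p$ for every weight $w$ (because $\min(1,\sigma/t)^p = \min(1,(\sigma/t)^p)$ and $\chi_{(0,\sigma)}^{**}(t)=\min(1,\sigma/t)$), feeding this back yields $\rho_{p,w}(f)^p \simeq \sum_{m} c_m^{(f)}\,\bar{\rho}_{p,w}(\chi_{(0,2^m)})^p$ with coefficients $c_m^{(f)} \ge 0$ independent of $w$. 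Taking $w = u$ and $w = u_p$ and using the first display, $\rho_{p,u}(f)\simeq\rho_{p,u_p}(f)$ for all $f$, i.e. $\rho = \rho_{p,u_p}$.

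The main obstacle is the anti-discretization: producing, with constants independent of the sequence, a positive superposition of the building sequences comparable to a given admissible one, respecting both monotonicity constraints of $\mathcal{C}$ simultaneously (the index hypothesis being used only inside \Cref{Lambdap spaces norm equivalence}). If one prefers to avoid it, one inequality is immediate — quasi-concavity of the fundamental function gives $\int_0^t u \le \bar{\rho}_{p,u}(\chi_{(0,t)})^p \lesssim \int_0^t u_p$, and a standard averaging argument (Hardy's lemma) applied to the non-increasing $(f^{**})^p$ then gives $\rho_{p,u} \le C\,\rho_{p,u_p}$ — while the reverse may be approached via $\bar{\rho}_{p,u_p}(f^{*}) \simeq \lambda_{p,u_p}(f^{*})$, the splitting $\lambda_{p,u_p}(f^{*})^p = \int_{\mathbb{R_+}} f^{*}(t)^p t^{-1}\!\int_0^t u + \int_{\mathbb{R_+}} f^{*}(t)^p t^{p-1}\!\int_t^\infty u(s)s^{-p}\,ds$, the elementary bound $\int_0^s f^{*}(t)^p t^{p-1}\,dt \le \bigl(\int_0^s f^{*}\bigr)^p$ (which disposes of the second term), and the quasi-power estimates on the fundamental function furnished by the index hypothesis; but the first term still needs the anti-discretization, or an equivalent weighted-Hardy verification on the cone $\mathcal{C}$, to be controlled.
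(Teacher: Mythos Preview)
Your opening step is exactly what the paper does: apply \Cref{Lambdap spaces norm equivalence} with $\rho=\rho_{p,u}$ to obtain $\bar\rho_{p,u_p}(\chi_{(0,t)})=\bar\rho_{p,u}(\chi_{(0,t)})$ for all $t\in\mathbb{R_+}$. From that point, however, the paper finishes in one line by invoking \cite[Theorem~5.1]{GK14}, which states precisely that two Lorentz Gamma spaces $\Gamma_{p,w_1}$ and $\Gamma_{p,w_2}$ with the same fundamental function coincide (with equivalent norms). Your discretization/anti-discretization argument is an attempt to re-prove that external fact by hand.

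The route you sketch is legitimate --- it is, in essence, the representation of quasi-concave functions as positive superpositions of the elementary ones $t\mapsto\min(1,\sigma/t)$, transported to your cone $\mathcal{C}$ --- but, as you yourself flag, the ``anti-discretization'' is the crux and you do not actually carry it out. Calling it ``standard'' is no more self-contained than the paper's citation of \cite{GK14}, and considerably less precise. To close this gap you would have to either (i) cite a specific source for the decomposition of admissible sequences (e.g., the least-concave-majorant device, or the discretization machinery in \cite{GK14} itself), or (ii) write the decomposition out explicitly, handling the edge behaviour (for instance $h=(f^{**})^p$ need not vanish at infinity, so the intended telescoping sums require care). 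Your alternative paragraph, as you concede, runs into the same obstacle.

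In short: the strategy is sound and does lead to the corollary, but the proposal has a hole at exactly the spot where the paper's proof simply appeals to \cite[Theorem~5.1]{GK14}. The paper's route buys brevity; yours would buy self-containment, but only once the anti-discretization is actually supplied.
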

\begin{proof}
The spaces $\Gamma_{p, u}(\mathbb{R}^n)$ and $\Gamma_{p, u_{p}}(\mathbb{R}^n)$ have $\bar{\rho}_{p, u_{p}} \left(  \chi_{(0,t)}\right)= \bar{\rho}_{p,u} \left(  \chi_{(0,t)}\right)$, $t \in \mathbb{R_+}$. As such, the spaces are identical, in view of \cite[Theorem 5.1]{GK14}.
\end{proof}

\begin{theorem} \label{Gamma space bw L2 Linfinity2}
    Fix  $p \in [2, \infty)$ and $u \in M_+(\mathbb{R_+})$, with $\int_{R_+} \frac{u(t)}{(1+t)^{p}}dt < \infty$. Suppose the fundamental indices of $\Gamma_{p,u}(\mathbb{R}^{n})$ lie in $(0,1)$. Then, $\Gamma_{p,u}(\mathbb{R}^n)$ is an interpolation space between $L_{p}(\mathbb{R}^n)$ and $L_{\infty}(\mathbb{R}^n)$ (and hence between $L_{2}(\mathbb{R}^n)$ and $L_{\infty}(\mathbb{R}^n)$) if and only if 
    \begin{equation} \label{characterization}
        \sup_{s \geq t} \frac{\bar{\rho}_{p,u} \left(  \chi_{(0,s)}\right)^{p}}{s} \leq C \,  \frac{\bar{\rho}_{p,u} \left(  \chi_{(0,t)}\right)^{p}}{t},
    \end{equation}
    for some $C>0$ independent of $t \in \mathbb{R_+}$.
\end{theorem}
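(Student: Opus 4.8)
The plan is, first, to use \Cref{Lambda=Gamma} to pass from the (somewhat awkward) $\Gamma$-norm to an equivalent classical weighted Lorentz norm, and then to recognise the latter as a norm of the form $\mu^{(p)}$ so that \Cref{Int Space bw p and infinity} becomes available. Throughout, write $\phi(t)=\bar\rho_{p,u}(\chi_{(0,t)})$ for the fundamental function of $\Gamma_{p,u}(\mathbb{R}^n)$ and set $u_p(t)=\phi(t)^p/t$. Since the fundamental indices of $\Gamma_{p,u}(\mathbb{R}^n)$ lie in $(0,1)$, \Cref{Lambdap spaces norm equivalence} and \Cref{Lambda=Gamma} give $\rho_{p,u}=\rho_{p,u_p}$ and $\bar\rho_{p,u}(f^*)\simeq\lambda_{p,u_p}(f^*)=\bigl(\int_{\mathbb{R_+}}f^*(t)^p\,\phi(t)^p\,t^{-1}\,dt\bigr)^{1/p}$; also $\int_0^1 u_p<\infty$, as $\Gamma_{p,u_p}$ is a genuine r.i.\ space. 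In this language condition (\ref{characterization}) says precisely that $\sup_{s\ge t}u_p(s)\le C\,u_p(t)$, i.e.\ that $u_p$ is equivalent to a non-increasing weight, so the theorem reduces to: $\Gamma_{p,u}(\mathbb{R}^n)$ is an interpolation space between $L_p$ and $L_\infty$ if and only if $u_p$ is equivalent to a decreasing weight.

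For sufficiency, assuming $u_p$ equivalent to a decreasing weight, I would set $\tilde w(t):=\sup_{s\ge t}u_p(s)$, so $u_p\le\tilde w\le C\,u_p$ and $\tilde w\downarrow$, and check that $\lambda_{1,\tilde w}(f):=\int_{\mathbb{R_+}}f^*\tilde w$ is a bona fide r.i.\ norm: the triangle inequality is the classical consequence of the concavity of $t\mapsto\int_0^t\tilde w$, and the other axioms of \Cref{defn-ri-spaces} follow from $\int_0^1\tilde w<\infty$ (using $\int_0^1 u_p<\infty$ and positivity of the lower fundamental index) and from $\tilde w>0$ everywhere (else $\phi$ would vanish on a half-line). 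Then $\bigl(\lambda_{1,\tilde w}\bigr)^{(p)}(f)=\lambda_{1,\tilde w}(f^p)^{1/p}=\bigl(\int_{\mathbb{R_+}}(f^*)^p\tilde w\bigr)^{1/p}\simeq\bar\rho_{p,u}(f^*)$, using $(f^p)^*=(f^*)^p$, so $\Gamma_{p,u}(\mathbb{R}^n)=L_{(\lambda_{1,\tilde w})^{(p)}}(\mathbb{R}^n)$ with equivalent norms; \Cref{Int Space bw p and infinity} (applied to the r.i.\ norm $\lambda_{1,\tilde w}$ with exponent $p$) then makes this space, hence $\Gamma_{p,u}(\mathbb{R}^n)$, an interpolation space between $L_p(\mathbb{R}^n)$ and $L_\infty(\mathbb{R}^n)$. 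For the parenthetical assertion I would note that, as $p\ge 2$, $L_p$ is itself an interpolation space for $(L_2,L_\infty)$, so any operator bounded on $L_2$ and $L_\infty$ is bounded on $L_p$, hence on $\Gamma_{p,u}$; together with $L_2\cap L_\infty\hookrightarrow L_p\cap L_\infty\hookrightarrow\Gamma_{p,u}\hookrightarrow L_p+L_\infty\hookrightarrow L_2+L_\infty$ this gives that $\Gamma_{p,u}(\mathbb{R}^n)$ is an interpolation space between $L_2(\mathbb{R}^n)$ and $L_\infty(\mathbb{R}^n)$ as well.

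For necessity, suppose $\Gamma_{p,u}(\mathbb{R}^n)$ is an interpolation space between $L_p$ and $L_\infty$; by the Lorentz--Shimogaki theorem it is then a monotone space for that couple, and I would test this with a single pair. Fixing $0<t\le s$, take $g$ with $g^*=\chi_{(0,t)}$ and $f$ with $f^*=(t/s)^{1/p}\chi_{(0,s)}$; a direct check gives $\int_0^\sigma f^{*p}\le\min(\sigma,t)=\int_0^\sigma g^{*p}$ for all $\sigma>0$, hence, via the formula for $K(\cdot\,,\cdot\,;L_p,L_\infty)$ recalled in Section~2, $K(\tau,f;L_p,L_\infty)\le c\,K(\tau,g;L_p,L_\infty)$ for every $\tau$, with $c$ depending only on $p$. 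Applying monotonicity to the pair $f$, $c\,g$ yields $\|f\|_{\Gamma_{p,u}}\le c\,\|g\|_{\Gamma_{p,u}}$, i.e.\ $(t/s)^{1/p}\phi(s)\le c\,\phi(t)$, which on raising to the $p$-th power is $\phi(s)^p/s\le c^p\,\phi(t)^p/t$; since $s\ge t$ is arbitrary this is exactly (\ref{characterization}).

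I expect the first paragraph to be the main obstacle: it is the step that turns the $\Gamma$-norm into something \Cref{Int Space bw p and infinity} can handle, and it is where the hypothesis on the fundamental indices is genuinely used, both to invoke \Cref{Lambdap spaces norm equivalence}/\Cref{Lambda=Gamma} and to ensure that the replacement weight $\tilde w$ yields an honest r.i.\ norm. Beyond that, the only delicate point is the bookkeeping with the equivalence constants in the $K$-functional comparison in the necessity step, which is handled by applying monotonicity to $c\,g$ rather than $g$; the rest is an assembly of results already established in the paper.
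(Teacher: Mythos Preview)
Your proof is correct. Both directions work, and the bookkeeping with the $K$-functional equivalence constants in the necessity step is handled properly.

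The approach, however, differs from the paper's in both halves. For sufficiency, the paper splits into cases: when $p=2$ it works directly with the Calder\'on-type inequality $\int_0^t (Tf)^{*2}\le C\int_0^{Mt} f^{*2}$ (from \cite{BS88}, \cite{H70}) and applies HLP against the decreasing majorant $\sup_{s\ge t}\phi(s)^2/s$, then uses \Cref{Lambdap spaces norm equivalence} to return to the $\Gamma$-norm; when $p>2$ it shows (\ref{characterization}) implies the condition (\ref{condition for btwn L2 and L infity}) of \Cref{characterization Gamma space bw L2 Linfinity} and defers to that theorem. Your argument is unified over all $p\ge 2$: you recognise $\Gamma_{p,u}$ as $L_{\mu^{(p)}}$ for the classical Lorentz norm $\mu=\lambda_{1,\tilde w}$ (whose r.i.\ axioms you verify), and then \Cref{Int Space bw p and infinity} finishes in one stroke. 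This is cleaner and makes the role of the decreasing-weight condition more transparent, at the cost of the small verification that $\lambda_{1,\tilde w}$ really is an r.i.\ norm. For necessity, the paper simply cites Maligranda's inequality $\phi(s)/\phi(t)\le C\max[(s/t)^{1/p},1]$; your test-function computation via Lorentz--Shimogaki monotonicity is exactly a self-contained derivation of the relevant case of that inequality, so the two are equivalent in content though yours is more explicit.
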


\begin{proof}
Suppose first that $p=2$. Given $ T : L_{2}(\mathbb{R}^n), L_{\infty}(\mathbb{R}^n) \rightarrow L_{2}(\mathbb{R}^n), L_{\infty}(\mathbb{R}^n)$ one has, according to  \cite[ Theorem 1.11, p. 301]{BS88} and \cite{H70},
\begin{equation*} \label{generalized HLP 2}
     \int_{0}^{t} (T f)^{*} (s)^{2} ds \leq C' M_{2}^{2}  \int_{0}^{Mt} f^{*} (s)^{2} ds 
    = C' M_{2}M_{\infty}   \int_{0}^{t} f^{*} (Ms)^{2} ds,
\end{equation*}
$f \in \left(L_{2} + L_{\infty} \right) (\mathbb{R_+})$, in which $M= M_{\infty} / M_{2}$, $M_{k}$ being the norm of $T$ on $L_{k}(\mathbb{R_+})$, $k=2, \infty$.
In view of (\ref{characterization}), HLP yields
\begin{equation*}
     \begin{split}
         \int_{\mathbb{R_+}}  (Tf)^{*}(t)^{2} \,  \frac{\bar{\rho}_{2,u} \left(  \chi_{(0,t)}\right)^{2}}{t} dt
          & \leq  \int_{\mathbb{R_+}}  (Tf)^{*}(t)^{2} \sup_{s \geq t}  \frac{\bar{\rho}_{2,u} \left(  \chi_{(0,s)}\right)^{2}}{s} dt\\
         &  \leq C' M_{2} M_{\infty} \int_{\mathbb{R_+}}  f^{*}(Mt)^{2} \sup_{s \geq t}  \frac{\bar{\rho}_{2,u} \left(  \chi_{(0,s)}\right)^{2}}{s} dt\\
         & \leq C C' M_{2} M_{\infty} \int_{\mathbb{R_+}}  f^{*}(Mt)^{2}  \, \frac{\bar{\rho}_{2,u} \left(  \chi_{(0,t)}\right)^{2}}{t} dt,
     \end{split}
\end{equation*}
$f \in \left(L_{2} + L_{\infty} \right) (\mathbb{R_+})$. \Cref{Lambdap spaces norm equivalence} now ensures the latter is equivalent to
\begin{equation*}
      \int_{\mathbb{R_+}}  (Tf)^{**}(t)^{2} \,  \frac{\bar{\rho}_{2,u} \left(  \chi_{(0,t)}\right)^{2}}{t} dt \leq C C' M_{2} M_{\infty} h(M)^2 \int_{\mathbb{R_+}}  f^{**}(t)^{2}  \,  \frac{\bar{\rho}_{2,u} \left(  \chi_{(0,t)}\right)^{2}}{t} dt,
\end{equation*}
where $h(t)$ is the norm of the dilation operator  $E_{t}$ on $\Gamma_{ 2, u_{2}   } (\mathbb{R_+}) = \Gamma_{2,u}(\mathbb{R_+})$, $u_{2}(s)= \frac{\bar{\rho}_{2,u} \left(  \chi_{(0,s)}\right)^{2}}{s}$, by 
 \Cref{Lambda=Gamma}, that is, $T : \Gamma_{2,u}(\mathbb{R}^n) \rightarrow  \Gamma_{2,u}(\mathbb{R}^n)$. Thus, $\Gamma_{2,u}(\mathbb{R}^n)$ is between $L_{2}(\mathbb{R}^n)$ and $L_{ \infty}(\mathbb{R}^n)$.
 
 Suppose, next, $p>2$. The ``if" part of our theorem will follow in this case if we can show (\ref{characterization})  implies (\ref{condition for btwn L2 and L infity}), with $u(t)= \frac{\bar{\rho}_{p,u} \left(  \chi_{(0,t)}\right)^{p}}{t}$. But,
 \begin{equation*}
   \begin{split}
         t^{p/2} \int_{t}^{\infty} \frac{\bar{\rho}_{p,u} \left(  \chi_{(0,s)}\right)^{p}}{s} \frac{ds}{s^{p/2}} 
     & \leq t^{p/2} \int_{t}^{\infty}  \sup_{y \geq s} \frac{\bar{\rho}_{p,u} \left(  \chi_{(0,y)}\right)^{p}}{y} \frac{ds}{s^{p/2}}\\
     & \leq t^{p/2}   \sup_{y \geq t} \frac{\bar{\rho}_{p,u} \left(  \chi_{(0,y)}\right)^{p}}{y} \int_{t}^{\infty}\frac{ds}{s^{p/2}}\\
     & \leq C \, t \, \frac{\bar{\rho}_{p,u} \left(  \chi_{(0,t)}\right)^{p}}{t} \\
     & \leq C^2  \int_{0}^{t} \frac{\bar{\rho}_{p,u} \left(  \chi_{(0,s)}\right)^{p}}{s} ds, \ \ \ t \in \mathbb{R_+}. 
     \end{split}
 \end{equation*}
 This completes the proof of ``if" part.

As for the ``only if" part we rely on a result of  L. Maligranda  \cite{M85} asserting that if $L_{\rho}(\mathbb{R}^n)$ is an interpolation space between $L_{p}(\mathbb{R}^n)$ and $L_{\infty}(\mathbb{R}^n)$, then
\begin{equation}\label{LM-necessity- for bw L2 -Linfinity} 
   \frac{ \bar{\rho} (\chi_{(0,s)}) }{ \bar{\rho} (\chi_{(0,t)}) } \leq C \max \left[ \left( \frac{s}{t}  \right)^{\frac{1}{p}} , 1  \right].
\end{equation}
Indeed, for $t \leq s$, (\ref{LM-necessity- for bw L2 -Linfinity}) yields
\begin{equation*}
    \frac{ \bar{\rho} (\chi_{(0,s)}) }{ \bar{\rho} (\chi_{(0,t)}) } \leq C \left( \frac{s}{t}  \right)^{\frac{1}{p}}
\end{equation*}
or
\begin{equation*}
   \frac{ \bar{\rho} (\chi_{(0,s)})^{p} }{ s } \leq C  \, \frac{ \bar{\rho} (\chi_{(0,t)})^{p}  }{t},  
\end{equation*}
from which (\ref{characterization}) follows.
\end{proof}
To this point the Lorentz Gamma range norms have been equivalent to functionals of the form
\begin{equation*}
    \lambda_{p,u} (f) = \left[  \int_{\mathbb{R_+}} f^{*}(s)^{p} u(t) dt  \right]^{\frac{1}{p}}.
\end{equation*}
This need not be the case for the $\rho_{ 2p, u}  $ in \Cref{F not lambda for Fourier} below.

\begin{lemma} \label{Largest weight for P on decreasing}
Fix $p \in (1, \infty)$ and $u \in M_+(\mathbb{R_+})$, with 
\begin{equation*}
    \int_{\mathbb{R_+}} \frac{ u(t) }{ (1+t)^{p} } dt < \infty.
\end{equation*}
Then,
\begin{equation} \label{Bp condition}
    \left(  \int_{\mathbb{R_+}} f^{**}(t)^{p} \, u(t) \, dt  \right)^{\frac{1}{p}}  \leq \left(  \int_{\mathbb{R_+}} f^{*}(t)^{p} \, u^{(p)}(t) \, dt  \right)^{\frac{1}{p}}, \ \ \ f \in M_+(\mathbb{R_+}),
\end{equation}
where
\begin{equation*}
    u^{(p)}(t) = p t^{p-1} \int_{t}^{\infty} u(s) s^{-p} ds, \ \ \ t \in \mathbb{R_+};
\end{equation*}
 moreover, $u^{(p)}$ is essentially the smallest weight for which  (\ref{Bp condition}) holds.
\end{lemma}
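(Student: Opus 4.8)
The plan is to read the asserted estimate as the statement that the Lorentz Gamma functional $\rho_{p,u}(f)$, i.e. $\bigl(\int_{\mathbb{R_+}}(f^{**})^{p}u\bigr)^{1/p}$, satisfies $\rho_{p,u}(f)\simeq\lambda_{p,u^{(p)}}(f)=\bigl(\int_{\mathbb{R_+}}(f^{*})^{p}u^{(p)}\bigr)^{1/p}$ uniformly in $f$, with $u^{(p)}$ the (up to $\simeq$) unique weight for which this holds. Since both sides depend on $f$ only through $f^{*}$ and $f^{**}=Pf^{*}$ with $(Ph)(t)=t^{-1}\int_{0}^{t}h$, it suffices to work with a nonincreasing $g\in M_{+}(\mathbb{R_+})$; after a routine truncation I may also assume $U(t):=\int_{t}^{\infty}u(s)s^{-p}\,ds<\infty$ for every $t>0$ and that $\int_{\mathbb{R_+}}g^{p}u^{(p)}<\infty$, the remaining cases being trivial. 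Writing $G(t)=\int_{0}^{t}g=t\,(Pg)(t)$ and using $u(t)=-t^{p}U'(t)$, $u^{(p)}(t)=p\,t^{p-1}U(t)$, an integration by parts in which the boundary terms $[G^{p}U]_{0}^{\infty}$ vanish yields the identity
\[
    \int_{0}^{\infty}(Pg)^{p}u\,dt=p\int_{0}^{\infty}G(t)^{p-1}g(t)\,U(t)\,dt=\int_{0}^{\infty}(Pg)(t)^{p-1}g(t)\,u^{(p)}(t)\,dt .
\]

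From this identity the easy half is immediate: since $g\leq Pg$ when $g$ is nonincreasing, one gets $\int_{0}^{\infty}g^{p}u^{(p)}\leq\int_{0}^{\infty}(Pg)^{p}u$, i.e. $\lambda_{p,u^{(p)}}(f)\leq\rho_{p,u}(f)$. Testing the identity with $g=\chi_{(0,r)}$ (so $Pg=\min(1,r/t)$) and evaluating the right-hand side by Fubini gives
\[
    \int_{0}^{\infty}\bigl(P\chi_{(0,r)}\bigr)^{p}u\,dt=\int_{0}^{r}u(t)\,dt+r^{p}\int_{r}^{\infty}u(t)t^{-p}\,dt=\int_{0}^{r}u^{(p)}(t)\,dt ,
\]
so the previous inequality is sharp on these functions; hence any weight $w$ for which $\int_{0}^{\infty}(Pg)^{p}u\leq C\int_{0}^{\infty}g^{p}w$ holds for all nonincreasing $g$ must satisfy $\int_{0}^{r}w\geq C^{-p}\int_{0}^{r}u^{(p)}$ for every $r>0$, which is the "essentially smallest weight" clause.

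The substantive half is the reverse inequality $\int_{0}^{\infty}(Pg)^{p}u\leq C\int_{0}^{\infty}g^{p}u^{(p)}$, and for this I would invoke the classical characterisation of the weighted Hardy inequality on the cone of nonincreasing functions in the case of equal exponents (Ari\~no--Muckenhoupt and Sawyer; available in the reduced form already used in this section, cf. \cite{GK14}): $\int_{0}^{\infty}(Pg)^{p}u\leq C\int_{0}^{\infty}g^{p}w$ for all nonincreasing $g$ holds precisely when
\[
    \sup_{r>0}\ \frac{\int_{0}^{r}u(t)\,dt+r^{p}\int_{r}^{\infty}u(t)t^{-p}\,dt}{\int_{0}^{r}w(t)\,dt}<\infty ,
\]
with least constant comparable to this supremum. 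By the Fubini identity the numerator is exactly $\int_{0}^{r}u^{(p)}$, so for $w=u^{(p)}$ the supremum equals $1$ and the inequality follows. The point requiring care is precisely this step: combining the basic identity with Hölder only produces $\int_{0}^{\infty}(Pg)^{p}u\leq\bigl(\int_{0}^{\infty}(Pg)^{p}u^{(p)}\bigr)^{1/p'}\bigl(\int_{0}^{\infty}g^{p}u^{(p)}\bigr)^{1/p}$, and the stray factor $\int(Pg)^{p}u^{(p)}$ is in general strictly larger than $\int(Pg)^{p}u$ (indeed $\int_{0}^{r}u^{(p)}\geq\int_{0}^{r}u$) and is not dominated by $\int g^{p}u^{(p)}$ unless $u^{(p)}$ belongs to $B_{p}$, which it need not. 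Thus the global structure of Hardy's inequality on the decreasing cone, rather than a pointwise or Hölder manipulation, is the essential ingredient; the integration by parts, the two Fubini computations, and the truncation justifying the vanishing of the boundary terms are all routine.
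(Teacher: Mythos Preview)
Your argument is correct and, at its core, identical to the paper's: both invoke the known characterization of $\bigl(\int (f^{**})^{p}u\bigr)^{1/p}\le C\bigl(\int (f^{*})^{p}w\bigr)^{1/p}$ on the decreasing cone (the paper cites Neugebauer~\cite{N91}; you cite Ari\~no--Muckenhoupt/Sawyer) and verify the criterion via the Fubini identity $\int_{0}^{r}u^{(p)}=\int_{0}^{r}u+r^{p}\int_{r}^{\infty}u(s)s^{-p}\,ds$. Your integration-by-parts identity and the reverse inequality $\lambda_{p,u^{(p)}}\le\rho_{p,u}$ are correct but extraneous (the lemma asserts only $\le$, not $\simeq$), while your explicit derivation of the ``essentially smallest'' clause from testing on $\chi_{(0,r)}$ makes precise what the paper leaves implicit in the cited ``if and only if''.
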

\begin{proof}
It is shown in \cite{N91} that
\begin{equation*}
    \left(  \int_{\mathbb{R_+}} f^{**}(t)^{p} \, u(t) \, dt  \right)^{\frac{1}{p}}  \leq \left(  \int_{\mathbb{R_+}} f^{*}(t)^{p} \, v (t) \, dt  \right)^{\frac{1}{p}}, \ \ \ f \in M_+(\mathbb{R_+}),
\end{equation*}
if and only if
\begin{equation*}
    \int_{0}^{t} u(s) ds + t^{p} \int_{t}^{\infty} u(s) s^{-p} ds \leq C \int_{0}^{t} v, \ \ \ t \in \mathbb{R_+}.
\end{equation*}
But,
\begin{equation*}
    \begin{split}
        \int_{0}^{t} u^{(p)} (s) ds & = \int_{0}^{t} p \, s^{p-1} \int_{s}^{\infty} u(y) y^{-p} dy \, ds\\
        & = \int_{0}^{t} p \, s^{p-1} \int_{s}^{t} u(y) y^{-p} dy \, ds + \left[ \int_{0}^{t} p \, s^{p-1} ds \right] \left[ \int_{t}^{\infty} u(s) s^{-p} ds \right]  \\ 
        & =  \int_{0}^{t} \left( \int_{0}^{y} p \, s^{p-1} ds  \right)    u(y) y^{-p} dy  + t^{p}  \int_{t}^{\infty} u(s) s^{-p} ds   \\ 
        & =  \int_{0}^{t}    u  + t^{p}  \int_{t}^{\infty} u(s) s^{-p} ds.
    \end{split}
\end{equation*}
We conclude that
\begin{equation*}
    \left(  \int_{\mathbb{R_+}} f^{**}(t)^{p} \, u(t) \, dt  \right)^{\frac{1}{p}}  \leq \left(  \int_{\mathbb{R_+}} f^{*}(t)^{p} \, u^{(p)}(t) \, dt  \right)^{\frac{1}{p}}, \ \ \ f \in M_+(\mathbb{R_+}).
\end{equation*}
\end{proof}

\begin{theorem} \label{F not lambda for Fourier}
Let $p$ and $u$ be as in \Cref{Largest weight for P on decreasing}. Then,
\begin{equation*}
    \rho_{  2p, u }   ( \widehat{f} \,  )  =  \bar{ \rho}_{  2p, u }   ( ( \widehat{f} )^{*}  )   \leq C  \bar{\rho}_{  2p, u^{(p)}_{2p} }   ( f^{*} ) = \rho_{  2p, u^{(p)}_{2p} }   ( f ),
\end{equation*}
where
\begin{equation*}
    u^{(p)}_{2p} (t)= u^{(p)} ( t^{-1} )  \, t^{2p -2} = p(t^{-1})^{p-1} \int_{t^{-1}}^{\infty} u(s) s^{-p} ds  \, t^{2p -2} = p t^{p-1} \int_{t^{-1}}^{\infty} u(s) s^{-p} ds.
\end{equation*}
\end{theorem}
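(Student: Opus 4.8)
The plan is to prove this directly from the Jodeit--Torchinsky estimate (\ref{JT U dominate F}), \emph{without} invoking the interpolation machinery of \Cref{F implies U on riBFS}: the square in (\ref{JT U dominate F}) together with a range exponent of the precise form $2p$ makes such a shortcut available. Throughout one takes $f\in(L_{1}\cap\Gamma_{2p,\,u^{(p)}_{2p}})(\mathbb{R}^{n})$, so that $\widehat{f}$ is defined and the right-hand side is finite (otherwise there is nothing to prove); write $\bar{\rho}_{p,u}$ for the Lorentz Gamma norm of exponent $p$ with weight $u$, which is an r.i.\ norm because $\int_{\mathbb{R_+}}u(t)(1+t)^{-p}\,dt<\infty$ by the hypothesis inherited from \Cref{Largest weight for P on decreasing}.

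First I would observe that $(\widehat{f})^{*}$ and $Uf^{*}$ are both non-increasing on $\mathbb{R_+}$, so their squares equal their own decreasing rearrangements and (\ref{JT U dominate F}) reads exactly $\big(((\widehat{f})^{*})^{2}\big)^{**}\le C\big((Uf^{*})^{2}\big)^{**}$ on $\mathbb{R_+}$; the Hardy--Littlewood--P\'olya principle then yields $\bar{\rho}_{p,u}\big(((\widehat{f})^{*})^{2}\big)\le C\,\bar{\rho}_{p,u}\big((Uf^{*})^{2}\big)$. On the other hand, Cauchy--Schwarz gives $(\widehat{f})^{**}(t)^{2}\le t^{-1}\int_{0}^{t}((\widehat{f})^{*})^{2}$, whence $\rho_{2p,u}(\widehat{f})^{2}=\bar{\rho}_{2p,u}\big((\widehat{f})^{*}\big)^{2}\le\bar{\rho}_{p,u}\big(((\widehat{f})^{*})^{2}\big)$, and combining the two gives $\rho_{2p,u}(\widehat{f})^{2}\le C\,\bar{\rho}_{p,u}\big((Uf^{*})^{2}\big)$. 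Next, since $g:=(Uf^{*})^{2}$ is non-negative and non-increasing, $g^{*}=g$ and $g^{**}=Pg$, so applying (\ref{Bp condition}) of \Cref{Largest weight for P on decreasing} to $g$ gives $\bar{\rho}_{p,u}\big((Uf^{*})^{2}\big)^{p}\le\int_{\mathbb{R_+}}(Uf^{*})(t)^{2p}\,u^{(p)}(t)\,dt$.

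It then remains only to recognise the last integral. Substituting $t\mapsto t^{-1}$ and using $(Uf^{*})(t^{-1})=\int_{0}^{t}f^{*}=t\,f^{**}(t)$ together with the stated formula for $u^{(p)}_{2p}$, one obtains $\int_{\mathbb{R_+}}(Uf^{*})(t)^{2p}u^{(p)}(t)\,dt=\int_{\mathbb{R_+}}f^{**}(t)^{2p}u^{(p)}_{2p}(t)\,dt=\rho_{2p,\,u^{(p)}_{2p}}(f)^{2p}$; chaining the displays of the previous paragraph produces $\rho_{2p,u}(\widehat{f})^{2p}\le C^{p}\rho_{2p,\,u^{(p)}_{2p}}(f)^{2p}$, which is the asserted inequality with the constant $C^{1/2}$ of (\ref{JT U dominate F}). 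The two substitutions and the application of \Cref{Largest weight for P on decreasing} are routine; the only delicate point is the first step, where one must note that both sides of (\ref{JT U dominate F}) are already decreasing, so the Hardy--Littlewood--P\'olya principle applies to their squares with no further rearranging, and use Cauchy--Schwarz in the correct direction to replace the $2p$-th power of the average $(\widehat{f})^{**}$ by the $L^{2}$-average $t^{-1}\int_{0}^{t}((\widehat{f})^{*})^{2}$ that (\ref{JT U dominate F}) actually controls. This is precisely where the form $2p$ of the range exponent is exploited, and it explains why, in contrast with the earlier results of this section, no interpolation assumption on $\Gamma_{2p,u}$ is needed.
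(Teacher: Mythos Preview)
Your proof is correct and follows essentially the same route as the paper's own argument: both use Cauchy--Schwarz to pass from $\rho_{2p,u}(\widehat{f})$ to $\bar\rho_{p,u}\big(((\widehat{f})^*)^2\big)$, apply the Jodeit--Torchinsky inequality (\ref{JT U dominate F}) together with the HLP principle to replace $((\widehat{f})^*)^2$ by $(Uf^*)^2$, invoke \Cref{Largest weight for P on decreasing} with the decreasing function $(Uf^*)^2$ to obtain $\lambda_{2p,u^{(p)}}(Uf^*)$, and finish with the substitution $t\mapsto t^{-1}$. The only cosmetic difference is that the paper first records the $\rho^{(2)}$-version of (\ref{Bp condition}) as a separate display before applying it to $Uf^*$, whereas you apply (\ref{Bp condition}) directly to $g=(Uf^*)^2$; the content is identical.
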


\begin{proof}
Applying the  construction in (\ref{Rho-p}) to the functionals in  (\ref{Bp condition})  yields
\begin{equation*}
    \left(  \int_{\mathbb{R_+}} \left( t^{-1} \int_{0}^{t} f^{*}(s)^{2} ds    \right)^{p} \, u(t) \, dt  \right)^{\frac{1}{2p}}  \leq \left(  \int_{\mathbb{R_+}} f^{*}(t)^{2p} \, u^{(p)}(t) \, dt  \right)^{\frac{1}{2p}} = \lambda_{2p, u^{(p)}} (f)
\end{equation*}
Again,
\begin{equation*}
\left( t^{-1} \int_{0}^{t} f^{*}(s) ds    \right)^{2p} \leq    \left( t^{-1} \int_{0}^{t} f^{*}(s)^{2} ds    \right)^{p}
\end{equation*}
by H\"older's inequality.

Hence,
\begin{equation*}
    \begin{split}
        \rho_{  2p, u   } ( \widehat{f} \,  ) 
        &  \leq   \rho_{ p, u   } ( \widehat{f}^{ \   2} \,  )^{1/2} \\
        & \leq C  \rho_{ p, u   } ( (U f^{*})^{ 2} \,  )^{1/2} \\
        & \leq C  \lambda_{  2p, u^{(p)}   } ( U f^{*} ) \\
        & = C \left(  \int_{\mathbb{R_+}} (U f^{*})(t)^{2p} \, u^{(p)}(t) \, dt  \right)^{\frac{1}{2p}} \\
        & = C \left(  \int_{\mathbb{R_+}}  f^{**}(t)^{2p} \, u^{(p)}_{2p}(t) \, dt  \right)^{\frac{1}{2p}} \\
        & = C  \rho_{   2p, u^{(p)}_{2p}   } ( f ).
    \end{split}
\end{equation*}
\end{proof}

\begin{example}
Fix $p$, $1<p< \infty$, and set
\begin{equation*}
    u(t)= \begin{cases}
                 t^{2p-1} \left( \log \textstyle\frac{1}{t}   \right)^{- \alpha}, & 0<t<1,\\
                 t^{p-1- \alpha},  & t>1,
                \end{cases}
\end{equation*}
for some $0<\alpha<1$. Then, $p$ and $u$ satisfy the conditions of \Cref{Largest weight for P on decreasing}. Moreover,
\begin{equation*}
    \rho_{ 2p, u  } (f) \not\simeq \lambda_{2p, u} (f), \ \ \ f \in M_+(\mathbb{R_+}),
\end{equation*}
or, equivalently,
\begin{equation} \label{Gamma not equivalent Lambda}
    t^{2p} \int_{t}^{\infty} u(s) s^{-2p} ds \leq C \int_{0}^{t} u, \ \ \ t \in \mathbb{R_+},
\end{equation}
does not hold.
Indeed, the left hand side of (\ref{Gamma not equivalent Lambda}) is equal to $C t^{2p} \left( \log \textstyle\frac{1}{t}  \right)^{- \alpha + 1}$, while the right hand side is 
\begin{equation*}
    \int_{0}^{t} u =  \int_{0}^{t}  s^{2p-1} \left( \log \textstyle\frac{1}{s}   \right)^{- \alpha} \simeq t^{2p} \left( \log \textstyle\frac{1}{t}   \right)^{- \alpha}, \ \ \ 0<t<1,
\end{equation*}
in view of L'H\^ospital rule. The ratio of the left side to the right side in (\ref{Gamma not equivalent Lambda}) is, essentially, $\log \textstyle\frac{1}{t}$ which $\rightarrow \infty$ as $t \rightarrow 0^{+}$.

\end{example}

\section{Other work}\label{Other works}
 Inequalities involving Fourier transform other than those considered in this paper are weighted Lebesgue inequalities
\begin{equation*} \label{Weighted Lp}
    \left(  \int_{\mathbb{R}^n} \lvert \hat{f}(x) \, u(x) \rvert^{q} \, dx  \right)^{\frac{1}{q}}  \leq C \left(  \int_{\mathbb{R}^n} \lvert f(x) \, v(x) \rvert^{p} \, dt  \right)^{\frac{1}{p}}
\end{equation*}
and weighted Lorentz inequalities
\begin{equation*} \label{Weighted Lorentz}
    \left(  \int_{\mathbb{R_+}}  (\hat{f} \, )^{*}(t)^{q} \, u(t)  \, dt  \right)^{\frac{1}{q}}  \leq C  \left(  \int_{\mathbb{R_+}}  \left( \int_{0}^{1/t} f^{*} \, \right)^{p} \, v(t)  \, dt  \right)^{\frac{1}{p}}. 
\end{equation*}

A brief survey of papers on these inequalities, from the pioneering work of Benedetto-Heinig \cite{BH03} through that of G. Sinnamon \cite{S03} and Rastegari-Sinnamon \cite{RS18}, is given in the paper \cite{NT20} of Nursultanov-Tikhonov.


\bibliographystyle{amsalpha}
\bibliography{refes}

\end{document}